\titleformat{\subsection}[runin]{\normalfont\bfseries}{\thesubsection}{0.5em}{}
\titlespacing{\section}{0pt}{*4}{*2}
\titlespacing{\subsection}{0pt}{*3}{*1}
\author{Tom Bohman\thanks{Department of Mathematical Sciences, Carnegie Mellon University, Pittsburgh, PA 15213. Research supported by NSF grant DMS-1100215.} \and Dhruv Mubayi\thanks{Department of Mathematics, Statistics, and Computer Science, University of Illinois at Chicago, Chicago, IL 60607. Research supported by NSF grant DMS-1300138.}
 \and Michael Picollelli \thanks{Department of Mathematics, California State University San Marcos, San Marcos, CA 92096.}}
\date{}
\renewcommand{\phi}{\varphi}
\renewcommand{\l}{\ell}
\newcommand{\ep}{\varepsilon}
\newcommand{\bin}[2]{\binom{#1}{#2}}
\newcommand{\pr}[1]{\mathbb{P}\left[#1\right]}
\newcommand{\ev}[1]{\mathbb{E}\left[#1\right]}
\newcommand{\lp}{\left(}
\newcommand{\rp}{\right)}
\newtheorem{theorem}{Theorem}
\newtheorem{definition}{Definition}
\newtheorem{lemma}[theorem]{Lemma}
\newtheorem{corollary}[theorem]{Corollary}
\newtheorem{claim}{Claim}
\newcommand{\ca}[1]{{\cal #1}}
\newcommand{\im}{i_{\max}}
\newcommand{\tm}{t_{\max}}
\renewcommand{\phi}{\varphi}
\begin{document}

\title{The independent neighborhoods process}

\maketitle

\begin{abstract}

A  triangle $T^{(r)}$ in an $r$-uniform hypergraph
is a set of $r+1$ edges such that $r$ of them share a common $(r-1)$-set of vertices and the last edge contains the remaining vertex from each of the first $r$ edges.  Our main result is that the random greedy triangle-free process on $n$ points terminates in an $r$-uniform hypergraph with independence number $O((n \log n)^{1/r})$.  As a consequence, using recent results on independent sets in hypergraphs, the Ramsey number $r(T^{(r)}, K_s^{(r)})$ has order of magnitude $s^r/\log s$. This answers questions posed in~\cite{BFM, KMV} and generalizes the celebrated results of Ajtai-Koml\'os-Szemer\'edi~\cite{AKS} and
Kim~\cite{K} to hypergraphs.

\end{abstract}

\section{Introduction}
An $r$-uniform hypergraph $H$ ($r$-graph for short) is a collection of $r$-element subsets of a vertex set $V(H)$.
Given $r$-graphs $G$ and $H$, the ramsey number $r(G, H)$ is the minimum $n$ such that every red/blue-edge coloring of the complete $r$-graph $K_n^{(r)}:={[n] \choose r}$  contains a red copy of $G$ or a blue copy of $H$ (often we will write $K_n$ for $K_n^{(r)}$).
Determining these numbers for graphs ($r=2$) is known to be notoriously difficult, indeed the order of magnitude (for fixed $t$) of $r(K_t, K_s)$ is wide open when $t \ge 4$.  The case $t=3$ is one of the celebrated results in graph Ramsey theory:
\begin{equation} \label{r3s} r(K_3,K_s)=\Theta(s^2/\log s).\end{equation}
 The upper bound was proved by Ajtai-Koml\'os-Szemer\'edi~\cite{AKS} as one of the first applications of the semi-random method in combinatorics (simpler proofs now exist due to Shearer~\cite{S83, S91}). The lower bound, due to Kim~\cite{K}, was also achieved by   using the semi-random or nibble method. More recently, the first author~\cite{B} showed that a lower bound for $r(K_3, K_s)$ could also be obtained by the triangle-free process, which is a random greedy algorithm. This settled a question of Spencer on the independence number of the triangle-free process. Still more recently, Bohman-Keevash~\cite{BK2} and Fiz Pontiveros-Griffiths-Morris~\cite{FGM} have analyzed the triangle-free process more carefully and improved the constants obtained so that the gap between the upper and lower bounds for $r(K_3, K_s)$ is now asymptotically a multiplicative factor of 4.

Given the difficulty of these basic questions in graph Ramsey theory, one would expect that the corresponding questions for hypergraphs are hopeless. This is not always the case.  Hypergraphs behave quite differently for asymmetric Ramsey problems, for example, there exist $K_4^{(3)}$-free 3-graphs on $n$ points with independence number of order $\log n$, so $r(K_4^{(3)}, K_s^{(3)})$ is exponential in $s$ unlike the graph case.
Consequently, to obtain $r$-graph results parallel to (\ref{r3s}), one must consider problems $r(G, K_s)$ where $G$ is much sparser than a complete graph. A recent result in this vein  due to Kostochka-Mubayi-Verstra\"ete~\cite{KMV} is that there are positive constants $c_1, c_2$ with
$$\frac{c_1 s^{3/2}}{(\log s)^{3/4}} < r(C_3^{(3)}, K_s^{(3)})< c_2 s^{3/2}$$
where $C_3^{(3)}$ is the loose triangle, comprising 3 edges that have pairwise intersections of size one and have no point in common. The authors in \cite{KMV} conjectured that $r(C_3^{(3)}, K_s^{(3)})=o(s^{3/2})$ and  the order of magnitude remains open. Another result of this type for hypergraphs due to Phelps and R\"odl~\cite{PR} is that $r(P_2^{(3)}, K_s^{(3)})=\Theta(s^2/\log s)$, where $P_t^{(3)}$ is the tight path with $t$ edges.  Recently, the second author and Cooper~\cite{CM} prove that for fixed $t \ge 4$, the behavior of this Ramsey number changes and we have
$r(P_t^{(3)}, K_s^{(3)})=\Theta(s^2)$; the growth rate for $t=3$ remains open. These are the only nontrivial hypergraph results of polynomial Ramsey numbers, and in this paper we add to this list with an extension of (\ref{r3s}).

\begin{definition} An $r$-uniform triangle $T^{(r)}$ is a set of
$r+1$ edges $b_1, \ldots, b_r, a$ with $b_i \cap b_j=R$ for all $i<j$ where $|R|=r-1$ and $a = \cup_i (b_i-R)$. In other words, $r$ of the edges share a common $(r-1)$-set of vertices, and the last edge contains the remaining point in all these previous edges.
\end{definition}

 When $r=2$, then $T^{(2)}=K_3$, so in this sense $T^{(r)}$ is a generalization of a graph triangle.  We may view a $T^{(r)}$-free $r$-graph
as one in which all  neighborhoods are independent sets, where the neighborhood of an $R \in {V(H)\choose r-1}$ is
$\{x: R \cup \{x\} \in H\}$. Frieze and the first two authors~\cite{BFM} proved that for fixed $r \ge 2$, there are positive constants $c_1$ and $c_2$ with
$$c_1\frac{s^r}{(\log s)^{r/(r-1)}}<r(T^{(r)}, K_s^{(r)})< c_2 s^r.$$
They conjectured that the upper bound could be improved to $o(s^r)$ and believed that the log factor in the lower bound could also be improved. Kostochka-Mubayi-Verstra\"ete~\cite{KMV} partially achieved this by improving the upper bound to
$$r(T^{(r)}, K_s^{(r)}) = O(s^r/\log r)$$
and believed that the log factor was optimal.

In this paper we verify this assertion by analyzing the $T^{(r)}$-free (hyper)graph process.
This process begins with an empty hypergraph $G(0)$ on $n$ vertices. Given $G(i-1)$, the hypergraph $G(i)$ is then formed by adding an edge $e_i$ selected uniformly at random from the $r$-sets of vertices which neither form edges of $G(i-1)$ nor create a copy of $T^{(r)}$ in the hypergraph $G(i-1) + e_i$.  The process terminates with a maximal $T^{(r)}$-free graph $G(M)$ with a random number $M$ of edges.  Our main result is the following:
\begin{theorem} \label{main}
For $r \ge 3$ fixed
the $T^{(r)}$-free process on $n$ points produces an $r$-graph with independence
number $ O\left( ( n \log n)^{1/r} \right) $ with high probability.
\end{theorem}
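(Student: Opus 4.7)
The plan is to adapt the differential-equation / nibble-style analysis that Bohman~\cite{B} developed for the triangle-free process ($r=2$) to the present higher-uniformity setting. Introduce the rescaled time $\tau = i/n^{r-1+1/r}$, let $Q(i)$ denote the number of $r$-sets available at step $i$, and write $Q(i) = n^r\hat{q}(\tau)$. A direct enumeration of the ways an added edge $e'$ can ``kill'' an available $e$---meaning that $\{e,e'\}$ together with some edges of $G(i-1)$ completes a $T^{(r)}$---splits into cases according to whether each of $e$ and $e'$ plays the role of a spoke or the transversal in the new $T^{(r)}$, and which $(r-1)$-set is the common axis. In each case the leading-order count of killing configurations is $\Theta(n^r\tau^{r-1}\hat{q}^2)$, with the two factors of $\hat{q}$ encoding that both $e$ and $e'$ must be available. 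Dividing by $|A(i)| = n^r\hat{q}$ gives the expected per-step decrease $c_r\tau^{r-1}\hat{q}$, whence the ODE $d\hat{q}/d\tau = -c_r\tau^{r-1}\hat{q}$ with solution $\hat{q}(\tau) = (1/r!)\exp(-c_r\tau^r/r)$. Consequently the process runs up to $\tau_{\max} = \Theta((\log n)^{1/r})$ and produces $M = \Theta(n^{r-1+1/r}(\log n)^{1/r})$ edges.

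To close the system and to obtain the concentration needed for the main analysis, I would track, alongside $Q$, a family of auxiliary statistics: link degrees $d(R) = |\{v:R\cup\{v\}\in G\}|$ for each $(r-1)$-set $R$; codegree-type counts (numbers of $(r-1)$-sets $R$ whose link contains a specified set $T$); and a finite family of rooted extensions that compute $\mathbb{E}[\Delta Q]$ exactly. Each such variable has a trajectory of the form (polynomial in $\tau$)$\cdot$(power of $\hat{q}$), and the system closes because the expected change of each variable is expressible through the trajectories of the others. Concentration is then established by applying Freedman-type supermartingale inequalities to the (rescaled) deviation from the trajectory; the self-correcting form of the expected change pulls back deviations and maintains a $(1+n^{-\Omega(1)})$ error tube throughout $[0,\tau_{\max}]$ with failure probability $\exp(-n^{\Omega(1)})$.

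For the independence-number bound, fix $S\subset V$ with $|S| = q = K(n\log n)^{1/r}$ for a constant $K=K(r)$ to be chosen, and let $Q_S(i)$ count the available $r$-sets contained in $S$. Running the same analysis for the localized variable $Q_S$ yields $Q_S(i) = \binom{q}{r}\hat{q}(\tau)(1+o(1))$, uniformly in $\tau\le\tau_{\max}$ and in $S$, with failure probability small enough to support a union bound over all $\binom{n}{q}$ choices of $S$. Since the probability of adding an edge inside $S$ at step $i$ is $Q_S(i)/Q(i) \sim \binom{q}{r}/n^r$, the probability that $S$ is independent in the final graph $G(M)$ is at most
\[
\prod_{i=0}^{M-1}\!\left(1-\frac{Q_S(i)}{Q(i)}\right) \le \exp\!\left(-(1+o(1))\binom{q}{r}\frac{M}{n^r}\right) = \exp\!\left(-\Theta\!\left(K^r n^{1/r}(\log n)^{1+1/r}\right)\right),
\]
which, for $K$ sufficiently large in terms of $r$, beats $\binom{n}{q}\le\exp(q\log n) = \exp(Kn^{1/r}(\log n)^{1+1/r})$ in the union bound over the choices of $S$.

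The main obstacle is the second step: identifying a closed finite family of auxiliary variables and establishing self-correcting concentration uniformly over all localized variables $Q_S$. A naive Azuma-type argument yields a per-$S$ failure probability that is far too large to survive the union bound over the $\binom{n}{q}$ choices of $S$, so the self-correcting fine-grained martingale analysis---showing that each $Q_S$ closely follows its trajectory with probability $1-\binom{n}{q}^{-\omega(1)}$---is the crux of the argument, and is where almost all of the technical effort goes.
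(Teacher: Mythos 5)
Your overall architecture (track the open count and the per-edge closure count along the trajectory $q(t)=e^{-t^r}$, localize to candidate independent sets, and finish with a union bound against $\binom{n}{k}\le e^{k\log n}$) matches the paper, and your final product-of-probabilities computation is essentially identical to the one in Section~3. But there is a genuine gap at exactly the point you defer to ``self-correcting martingale analysis'': the claim that $Q_S(i)=\binom{q}{r}\hat q(\tau)(1+o(1))$ holds uniformly over all $k$-sets $S$ is not something you can prove, and the obstruction is structural, not a matter of sharpening the concentration inequality. Every $r$-subset of the neighborhood $N_i(X)$ of an $(r-1)$-set $X$ is closed, and these neighborhoods grow to size $\Theta((n\log n)^{1/r})=\Theta(k)$; a $k$-set $S$ that swallows a large piece of such a neighborhood does not contain the ``right'' number of open $r$-sets. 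Worse for your martingale: if $|N_i(X)\cap S|=\Theta(k)$, then adding the single edge $X\cup\{v\}$ with $v\in S$ can close $\Theta(k^{r-1})$ of the $r$-sets counted by $Q_S$ in one step. With one-step changes of order $k^{r-1}$ (rather than $k^{r-1}/n^{\Omega(\epsilon)}$), the exponent in any Freedman/Hoeffding-type bound comes out to roughly $k\,n^{-\Omega(\epsilon)}$, which loses to the $k\log n$ entropy of the union bound. Self-correction cannot repair this, because the problem is in the boundedness hypothesis, not in the drift.

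The paper's fix is a combinatorial decomposition that you would need to supply. Instead of tracking $Q_S$ for $k$-sets, it tracks, for each \emph{pair} of disjoint $\ell$-sets $A,B$ (with $\ell\approx k/2$), the number of open $r$-sets inside $A\cup B$ meeting both $A$ and $B$, and it freezes this variable at the stopping time $\tau_{A,B}$, defined as the first step at which some $(r-1)$-set has a neighborhood meeting both $A$ and $B$ with $|N_i(X)\cap(A\cup B)|\ge k/n^{2\epsilon}$. Before the stopping time, the one-step change is $O(k^{r-1}/n^{4\epsilon})$, which is precisely the savings needed to beat the union bound. A separate deterministic lemma (using a max-degree bound $\Delta_{r-1}(G(i))=O((n\log n)^{1/r})$ and an $O(1)$ codegree bound for pairs of $(r-1)$-sets) then shows that every $k$-set contains at least one pair $A,B$ with $\tau_{A,B}>i$, so the good count transfers back to all $k$-sets. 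Without this two-step device (or an equivalent), your plan does not go through. A smaller point: the global trajectory facts you propose to re-derive via a closed system of extension variables are imported in the paper directly from the Bennett--Bohman analysis of the random greedy independent set process, which is a legitimate shortcut but not where the difficulty lies.
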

\noindent
This result together with the aformentioned result of
Kostochka-Mubayi-Verstra\"ete give the following generalization of (\ref{r3s})
to hypergraphs.
\begin{corollary}
For fixed $r \ge 3$ there are positive constants $c_1$ and $c_2$ with
$$c_1\frac{s^r}{\log s}<r(T^{(r)}, K_s^{(r)})< c_2 \frac{s^r}{\log s}.$$\end{corollary}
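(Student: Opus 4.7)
The corollary asserts matching upper and lower bounds of order $s^r/\log s$ on $r(T^{(r)},K_s^{(r)})$, and I would establish the two inequalities from separate ingredients. The lower bound is an immediate consequence of Theorem~\ref{main}. Let $K=K(r)$ denote the implicit constant in Theorem~\ref{main}, so whp the $T^{(r)}$-free process on $n$ vertices halts with an $r$-graph of independence number at most $K(n\log n)^{1/r}$. I would choose $n$ to be the largest integer with $K(n\log n)^{1/r}<s$; since $\log n \asymp r\log s$ in the relevant range, this gives $n\geq c_1 s^r/\log s$ for some $c_1=c_1(r)>0$. Running the process on $[n]$ produces (whp, hence at least once) a $T^{(r)}$-free $r$-graph $G$ with $\alpha(G)<s$. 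Red-colouring $E(G)$ and blue-colouring the complement yields a two-edge-colouring of $K_n^{(r)}$ with no red $T^{(r)}$ (by construction) and no blue $K_s^{(r)}$ (since $\alpha(G)<s$), so $r(T^{(r)},K_s^{(r)})>n\geq c_1 s^r/\log s$.

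For the upper bound I would appeal to the Kostochka--Mubayi--Verstra\"ete bound recalled in the introduction, whose proof rests on a nibble / semi-random argument showing that every $T^{(r)}$-free $r$-graph on $N$ vertices contains an independent set of size at least $c(r)(N\log N)^{1/r}$. (This is exactly the hypergraph independence statement which the abstract refers to as ``recent results on independent sets in hypergraphs''; it matches up to constants the independence-number upper bound delivered by Theorem~\ref{main}.) The Ramsey-theoretic reduction is then the standard one: given a red/blue edge-colouring of $K_N^{(r)}$ with no red $T^{(r)}$, the red $r$-graph is $T^{(r)}$-free and hence admits an independent set of size at least $c(r)(N\log N)^{1/r}$, which supports a blue $K_s^{(r)}$ as soon as this quantity reaches $s$, i.e.\ once $N\geq c_2 s^r/\log s$ for a suitable $c_2=c_2(r)$. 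This gives $r(T^{(r)},K_s^{(r)})<c_2 s^r/\log s$, completing the corollary.

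The main obstacle is entirely on the lower-bound side, namely the proof of Theorem~\ref{main} itself, which is the substance of the paper; once Theorem~\ref{main} is established the deduction of the lower bound above is a few lines of arithmetic. On the upper-bound side the work has already been done in the literature, and the only subtlety is identifying the correct polylogarithmic factor $(\log N)^{1/r}$ in the hypergraph independent-set estimate for $T^{(r)}$-free $r$-graphs, which is available through the independence-number results cited in the introduction.
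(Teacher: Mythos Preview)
Your proposal is correct and matches the paper's approach exactly: the paper does not give a standalone proof of the corollary but simply notes that it follows by combining Theorem~\ref{main} (for the lower bound, via the standard construction you describe) with the Kostochka--Mubayi--Verstra\"ete upper bound $r(T^{(r)},K_s^{(r)})=O(s^r/\log s)$ cited in the introduction. Your write-up supplies the routine arithmetic and the Ramsey reductions that the paper leaves implicit, but the ingredients and the logic are the same.
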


Graph processes that iteratively add edges chosen uniformly at random subject to the
condition that some graph property is maintained have been used to generate interesting
combinatorial objects in a number of contexts.  In addition to the lower bound on
the Ramsey number $ r(K_3, K_s) $ given by the triangle-free graph process
(discussed above), the $H$-free graph process gives the best known lower bound
on the Ramsey number $ r(K_t,K_s) $ for $ t \ge 4$ fixed and the best known lower bound
on the Tur\'an numbers for some bipartite graphs \cite{BK}.  The process that forms
a subset of $ {\mathbb Z}_n$ by iteratively choosing elements to be members of the set
uniformly at random subject to the condition that the set does not contains a $k$-term
arithmetic progression produces a set that has interesting properties with respect to
the Gowers norm \cite{BB}.

The $T^{(r)}$-free (hyper)graph process can be viewed as an instance of the random greedy hypergraph
independent set process.  Let $ H$ be a hypergraph.  An independent
set in $ H $ is a set of vertices that contains no edge of $H$.  The random
greedy independent set process forms such a set by starting with an empty set of vertices and
iteratively choosing vertices uniformly
at random subject to the condition that the set of chosen vertices continues to be
an independent set.  We study the random greedy
independent set process for the hypergraph ${\mathcal H}_{T^{(r)}}$ which has vertex set $ \binom{[n]}{r} $
and edge set consisting of all copies of $ T^{(r)}$ on vertex set $[n]$. Note that, since an independent
 set in  ${\mathcal H}_{T^{(r)}}$ gives a $T^{(r)}$-free $r$-graph on point set $[n]$, the random
greedy independent set process on ${\mathcal H}_{T^{(r)}}$ is equivalent to the $T^{(r)}$-free process.
Our analysis of the
$T^{(r)}$-free process is based on recent work on the random greedy hypergraph independent set process
due to Bennett and Bohman \cite{BB}.

The remainder of the paper is organized as follows.  In the following Section we establish some
notation and recall the necessary facts from \cite{BB}.  The proof of Theorem~\ref{main} is given
in the Section that follows, modulo the proofs of some technical lemmas. These lemmas are proved
in the final Section by application of the differential equations method for proving dynamic
concentration.

\section{Preliminaries }

Let $\ca{H}$ be a hypergraph on vertex set $V=V(\ca{H})$.  For each set of vertices $A \subseteq V$, let $N_{\ca{H}}(A)$ denote the {\bf neighborhood} of $A$ in $\ca{H}$, the family of all sets $Y \subseteq V\setminus A$ for which $A \cup Y \in \ca{H}$.  We then define the {\bf degree} of $A$ in $\ca{H}$ to be $d_{\ca{H}}(A)=|N_{\ca{H}}(A)|$.  For a nonnegative integer $a$, we define $\Delta_a(\ca{H})$ to be the maximum of $d_{\ca{H}}(A)$ over all $A \in \bin{V}{a}$.  Next, for a pair of (not necessarily disjoint) sets $A,B \subseteq V$, we define the {\bf codegree} of $A$ and $B$ to be the number of sets $X \subseteq V \setminus (A \cup B)$ for which $A \cup X, B \cup X$ both lie in $\ca{H}$.

Recall that we define $G(i)$ to be the $r$-graph produced through $i$ steps of the $T^{(r)}$-free process.
We let $\ca{F}_i$ denote the natural filtration determined by the process (see \cite{B}, for example).  We also simplify our notation somewhat and write $N_i(A)$ in place of $N_{G(i)}(A)$, $d_i(A)$ in place of $d_{G(i)}(A)$, etc., when appropriate.

The $r$-graph $G(i)$ partitions $\bin{[n]}{r}$ into three sets $E(i),O(i),C(i)$.  The set $E(i)$ is simply the set of $i$ edges chosen in the first $i$ steps of the process.  The set $O(i)$ consists of the {\bf open} $r$-sets: all $e \in \bin{n}{r} \setminus E(i)$ for which $G(i)+e$ is $T^{(r)}$-free.  The $r$-sets in $C(i) := \bin{[n]}{r}\setminus (E(i) \cup O(i))$ are {\bf closed}.  Finally, for each open $r$-set $e \in O(i)$, we define the set $C_e(i)$ to consist of all open $r$-sets $f \in O(i)$ such that the graph $G(i)+e+f$ contains a copy of $T^{(r)}$ using both $e$ and $f$ as edges.  (That is, $C_e(i)$ consists of the open $r$-sets whose selection as the next edge $e_{i+1}$ would result in $e \in C(i+1)$.)

We now introduce some notation in preparation for our application of the results in \cite{BB}.
Set
\begin{align*}
N &:= \bin{n}{r} & D &:= (r+1)\cdot \bin{n-r}{r-1} & s &:= \frac{N}{D^{1/r}}.
\end{align*}
Note that $N$ is the size of the vertex set of the hypergraph ${\mathcal H}_{T^{(r)}}$ and $D$ is the vertex degree
of ${\mathcal H}_{T^{(r)}}$ (in other words, every $r$-set in $[n]$ is in $D$ copies of $ T^{(r)}$).  The
parameter $s$ is the `scaling' for the length of the process.  This choice is motivated by the heuristic that
$E(i)$ should be pseudorandom; that is, $E(i)$ should resemble in some ways a collection of $r$-sets chosen uniformly at random (without any further condition).  If this is indeed the case
then the probability that a given $r$-set is open would be roughly
\[  \left( 1 - \left(\frac{i}{N}\right)^r \right)^D \approx \exp \left\{ - \left(\frac{i}{N}\right)^r D \right\} \]
and a substantial number of $r$-sets are closed when roughly $s$ edges have been added.  In order to
discuss the evolution in more detail, we pass to a limit by introducing a
continuous time variable $t$ where $t=t(i) = i/s$.

The evolution of key parameters of the process closely
follow trajectories given by the functions
\[ q(t):= \exp\left\{ -t^r \right\} \; \; \;  \text{ and } \; \; \; c(t):= -q'(t) = rt^{r-1}q(t). \]
We introduce small constants $ \zeta, \gamma $ such that $ \zeta \ll \gamma \ll 1/r$.
(The notation $ \alpha \ll \beta $ here means that $ \alpha $ is chosen to be sufficiently small relative to $ \beta $.)
The point where we stop tracking the process is given by
\[\im :=\zeta \cdot ND^{-1/r}(\log^{1/r} N) \;\;\;\mbox{ and } \;\;\; \tm :=\im/s =  \zeta \log^{1/r} N.\]
For $i^* \ge 0$, let $\ca{T}_{i^*}$ denote the event that the following estimates hold for all steps $0 \le i \le i^*$:
\begin{equation}
\label{eq:open}
|O(i)| = \lp q(t) \pm N^{-\gamma} \rp N
\end{equation}
and for every open $r$-set $e \in O(i)$
\begin{equation}
\label{eq:degree}
|C_e(i)| = \lp c(t) \pm  N^{-\gamma}\rp D^{1/r}.
\end{equation}
It follows from the results of Bohman and Bennett that $ \ca{T}_{\im}$ holds
with high probability.
\begin{quote}
\begin{proof}
This follows from the estimates for the random greedy hypergraph independent set process given in \cite{BB} applied to the $(r+1)$-uniform hypergraph $ \ca{ H}_{T^{(r)}}$.  Verification of the conditions of Theorem 1.1 in \cite{BB} for this hypergraph is routine.  (Note that $ \Delta_\ell (\ca{H}_{T^{(r)}}) = \Theta( n^{r - \ell})$ and $ \Gamma_r( \ca{H}_{T^{(r)}}) = 0$.) The estimates (\ref{eq:open}) and (\ref{eq:degree}) above then follow from those on $|V(i)|$ and $d_2(v,i)$
given by (5) and (6) in \cite{BB}.
\end{proof}
\end{quote}

\noindent
Note that the fact that $ \ca{T}_{\im} $ holds with high probability does not
prove that the independence number of $ G(M)$ is $ O \lp ( n \log n)^{1/r} \rp$ with high probability.
This is proved below.

We will also make use of the following fact regarding
$r$-graphs that appear as subgraphs of the $T^{(r)}$-free process.
\begin{lemma}[\cite{BB} Lemma 4.2]
\label{lem:subgraph}
Fix a constant $L$ and suppose $e_1,\ldots,e_L \in \bin{[n]}{r}$ form a $T^{(r)}$-free hypergraph.  Then for all steps $j \le \im$,
\[\pr{\{e_1,\ldots,e_L\} \subseteq E(j)} = (j/N)^L\cdot (1 + o(1)).\]
\end{lemma}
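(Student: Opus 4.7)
The plan is to decompose the event $\{e_1,\ldots,e_L\}\subseteq E(j)$ according to the order and the time steps at which the $L$ edges are added, and then use the tracking estimate~(\ref{eq:open}) to approximate each resulting term. Let $X_i$ denote the edge selected at step $i$, and write
\[
\pr{\{e_1,\ldots,e_L\}\subseteq E(j)} \;=\; \sum_{\pi\in S_L}\sum_{1\le j_1<\cdots<j_L\le j} p_\pi(j_1,\ldots,j_L),
\]
where $p_\pi(\vec\jmath):=\pr{X_{j_k}=e_{\pi(k)} \text{ for all } k}$. By the chain rule,
\[
p_\pi(\vec\jmath) \;=\; \prod_{k=1}^{L} \pr{X_{j_k}=e_{\pi(k)} \mid X_{j_\ell}=e_{\pi(\ell)} \text{ for } \ell<k},
\]
and each conditional factor equals the conditional expectation of $\mathbf{1}[e_{\pi(k)}\in O(j_k-1)]/|O(j_k-1)|$.

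On the high-probability event $\ca{T}_{\im}$, each denominator equals $(q(t_k)+O(N^{-\gamma}))N$ with $t_k=(j_k-1)/s$; the complement contributes negligibly by a trivial bound. The central subclaim is that, conditional on $X_{j_\ell}=e_{\pi(\ell)}$ for $\ell<k$, the probability $e_{\pi(k)}$ is still open at step $j_k-1$ is $(1+o(1))q(t_k)$. Granting this, every factor equals $(1+o(1))N^{-1}$, hence $p_\pi(\vec\jmath)=(1+o(1))N^{-L}$, and summation over the $L!$ orderings $\pi$ and the $\binom{j}{L}$ position sets yields the claimed $(j/N)^L(1+o(1))$.

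The principal obstacle is the subclaim on conditional openness. The unconditional bound $\pr{e\in O(i)}=(1+o(1))q(t)$ is immediate from the transitive action of $S_n$ on $r$-sets together with~(\ref{eq:open}). The conditional version requires that specifying the $L-1=O(1)$ prior selections, which by hypothesis together with $e_{\pi(k)}$ form a $T^{(r)}$-free configuration, does not appreciably distort this marginal estimate. This follows along the lines of the joint-openness concentration results of \cite{BB} (their equations~(5)--(6) applied to the auxiliary hypergraph $\ca{H}_{T^{(r)}}$), exploiting that the structural parameters $\Delta_\ell(\ca{H}_{T^{(r)}})=\Theta(n^{r-\ell})$ and $\Gamma_r(\ca{H}_{T^{(r)}})=0$ noted in the proof of the tracking estimate rule out pathological correlations between the closing events for distinct $e_k$'s. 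The $T^{(r)}$-free hypothesis on $\{e_1,\ldots,e_L\}$ is used to guarantee that the coexistence event is feasible at all (otherwise the probability is trivially zero).
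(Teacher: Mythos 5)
First, a point of comparison: the paper does not prove this statement at all --- it is imported verbatim from \cite{BB} (Lemma 4.2 there), so the ``paper's proof'' is just the citation. Judged on its own, your skeleton is the natural one and matches the standard route: decompose $\{e_1,\ldots,e_L\}\subseteq E(j)$ over the $L!$ orderings and the $\binom{j}{L}$ step-tuples, apply the chain rule, write each conditional factor as $\ev{\mathbf{1}[e_{\pi(k)}\in O(j_k-1)]/|O(j_k-1)|\mid\cdots}$, and control the denominator via $\ca{T}_{\im}$ and (\ref{eq:open}), with a trivial bound on the complementary event. All of that is fine.

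The genuine gap is exactly the step you label ``the principal obstacle'' and then do not overcome: the claim that, conditional on the earlier forced selections, $e_{\pi(k)}$ is open at step $j_k-1$ with probability $(1+o(1))q(t_k)$. Your two justifications do not close it. The $S_n$-symmetry argument yields only the unconditional marginal and collapses the moment you condition on specific $r$-sets being chosen at specific steps, since that conditioning destroys the symmetry. And estimates (5)--(6) of \cite{BB} (i.e.\ (\ref{eq:open}) and (\ref{eq:degree}) here) control the global count $|O(i)|$ and the sets $C_e(i)$; they say nothing directly about the survival probability of a prescribed $r$-set under this conditioning, and the parameters $\Delta_\ell$, $\Gamma_r$ do not ``rule out pathological correlations'' by themselves. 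What is actually needed --- and what is essentially the entire content of the cited lemma --- is a telescoping survival computation: given $\ca{F}_i$ with $e$ open, the probability that $e$ remains open after step $i+1$ is exactly $1-(|C_e(i)|+1)/|O(i)|$, which on $\ca{T}_i$ equals $1-(1+o(1))c(t)/(q(t)s)$; multiplying over the unforced steps gives $\exp\{-\int_0^{t}c(u)/q(u)\,du\}(1+o(1))=q(t)(1+o(1))$, and the $O(1)$ forced steps $j_\ell$ must be handled separately, using the $T^{(r)}$-freeness of $\{e_1,\ldots,e_L\}$ together with a counting/codegree bound to show that adding $e_{\pi(\ell)}$ closes $e_{\pi(k)}$ only with probability $o(1)$. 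As written, ``granting this'' amounts to assuming the lemma rather than proving it.
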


We conclude this Section by noting that the desired bound on the independence number
of $G(M)$ can be viewed as a pseudorandom property of the $r$-graph $ G(i) $.  Indeed, if $ G(i)$
resembles a collection of $r$-sets chosen uniformly at random then the expected number of
independent sets of size $k$ would be
\[ \binom{n}{k} \left( 1 - \frac{ \binom{k}{r}}{ \binom{n}{r}} \right)^i
= \exp \left\{ \Theta \left( k \log n \right) - \Theta \left( i \frac{k^r}{n^r} \right) \right\}. \]
If the process lasts through $ i = \Theta( N D^{-1/r} ( \log^{1/r} N) ) = \Theta( n^{r -1 + 1/r} \log^{1/r} n) $
steps then we would anticipate an independence number of $ O\left( (n \log n)^{1/r} \right) $.  In the remainder of the paper
we make this heuristic calculation rigorous.

\section{Independence number: Proof of Theorem~\ref{main}}

We expand the list of constants given in the previous section by introducing
large constants $\kappa$ and $W$, and small constant $\epsilon $ such that
\begin{equation}
\label{eq:constants}
\frac{1}{\kappa} \ll \zeta \ll \frac{1}{W} \ll \ep \ll \gamma.
\end{equation}
In the course of the argument we introduce dynamic concentration phenomena
that will stated in terms of the error function
\[f(t) := \exp \left\{ W(t^{r} + t)\right\}.\]

Define the constant $\lambda:= \frac{\kappa - \gamma}{2}$, and then let
\begin{align*}
k &:= \kappa (n \log n)^{1/r} &  \mbox{ and } & & \ell &:= \lambda (n \log n)^{1/r},
\end{align*}
noting that as $\gamma$ is small, $k \approx 2\l$.
Our aim is to show that the independence number of $G(\im)$ is at most $k$ with high probability.  To do so, we will show that provided $\kappa$ is suitably large, w.h.p. for every step $0 \le i \le \im$, every $k$-element set of vertices has at least $ \Omega \lp q(t) \bin{k}{r} \rp$ open $r$-sets. As equation (\ref{eq:open}) establishes $ (1+o(1)) q(t) N$ open $r$-sets in total w.h.p., the probability that
$\ca{T}_{\im}$ holds and a given $k$-set remains independent over all $\im$ steps is then at most
\[ \prod_{i=1}^{\im}  \lp 1 -  \Omega \lp \frac{q(t)k^r}{q(t)N} \rp \rp = \lp 1 -  \Omega \lp \frac{\kappa^r \log n}{n^{r-1}} \rp \rp^{\im} = \exp\left\{-\zeta\kappa^r\cdot \Omega(n^{1/r} \log^{1+1/r} n) \right\},\]
where our $ O( \cdot), \Omega( \cdot), \Theta( \cdot ) $ notation does not
suppress any constant that appears in (\ref{eq:constants}).
Since $$n^k =\exp \left\{ \kappa \cdot O (n^{1/r} \log^{1+1/r} n) \right\},$$ this suffices by the union bound, provided $\kappa$ is suitably large
with respect to $r$ and $ \zeta $.

There is a significant obstacle to proving that every set of $k$ vertices contains the `right' number
of open $r$-sets.  Note that all $r$-sets within the neighborhood of an $(r-1)$-set are closed.
(To be precise, if $ A \in \binom{[n]}{r-1} $ then $ \binom{N_i(A)}{r} \subseteq C(i) $).  So a
set of $k$ vertices that has a large intersection with the neighborhood of an
$(r-1)$-set does not have the `right' number of open $r$-sets.
To overcome this obstacle, we extend the
argument in \cite{B} for bounding the independence number of the
triangle-free process.  Our argument has two steps:
\begin{enumerate}
    \item We apply the differential equations method for establishing dynamic concentration to show that
unless a certain `bad' condition occurs, a pair of disjoint $\l$-sets will have the `right' number of
open $r$-sets that are contained in the union of the pair of $\l$-sets and intersect both $\l$-sets, that is about $q(t)\cdot [\bin{2\l}{r}-2\bin{\l}{r}]$ open $r$-sets.
Note that $\bin{2\l}{r}-2\bin{\l}{r} > \frac{1}{3}\bin{k}{r}$, say, as $\gamma$ is small.
    \item We then argue that w.h.p., every $k$-set contains a (disjoint) pair of $\l$-sets which is `good', i.e., for which the bad condition does not occur.
\end{enumerate}
We formalize this with the notion of $r$-sets which are open `with respect to' a pair of disjoint $\l$-sets.

\begin{definition}
Fix a disjoint pair $A,B \in \bin{[n]}{\l}$.  The stopping time $\tau_{A,B}$ is the minimum of $\im$ and the first step $i$ for which there exists a $(r-1)$-set $X$ such that
$$ N_i(X)\cap A \ne \emptyset, \ \ \ \ N_i(X)\cap B \ne \emptyset, \ \ \ \text{ and } \ \ \ |N_i(X)\cap (A \cup B)| \ge k/n^{2\ep}.$$
\end{definition}

\begin{definition}
For each step $i \ge 0$, we say that an $r$-set $e \subseteq A \cup B$ is {\bf open with respect to the pair $A,B$ in $G(i)$} if $e \cap A \ne \emptyset$, $e \cap B \ne \emptyset$, and either

$\bullet$ $e \in O(i)$ {\em or}

$\bullet$ $e \in O(i-1) \cap C(i)$ and $i=\tau_{A,B}$.

\noindent Let $Q_{A,B}(i)$ count the number of $r$-sets which are open with respect to the pair $A,B$ in $G(i)$. \end{definition}

\begin{lemma}\label{lem:relopentraj}
With high probability, for every disjoint pair $A,B \in \bin{[n]}{\l}$ and all steps $0 \le i \le \tau_{A,B}$,
\begin{equation}\label{eq:relopenpairs}
Q_{A,B}(i) = \lp q(t) \pm \frac{f(t)}{n^{\ep}}\rp \cdot \left[\bin{2\l}{r}-2\bin{\l}{r}\right].
\end{equation}
\end{lemma}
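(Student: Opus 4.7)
The plan is to apply the differential equations method to show that both
\[Q^\pm_{A,B}(i) := \pm\bigl(Q_{A,B}(i) - q(t)M\bigr) - \frac{f(t)}{n^\ep}\,M, \qquad M := \tbinom{2\l}{r} - 2\tbinom{\l}{r},\]
remain nonpositive for every $0 \le i \le \tau_{A,B}$ with very high probability, after which we take a union bound over the at most $\binom{n}{\l}^2 = \exp\bigl(O(n^{1/r}\log^{1+1/r} n)\bigr)$ disjoint ordered pairs $(A,B)$. Each $Q^\pm_{A,B}$ will be shown to be a supermartingale (on the intersection of $\ca{T}_i$, $\{i < \tau_{A,B}\}$, and the on-trajectory event) with bounded increments, so that Freedman's inequality applies to the stopped process.

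For the expected one-step change, note that when $i+1 \le \tau_{A,B}$, an $r$-set $e$ open with respect to $A,B$ fails to be so at step $i+1$ exactly when $e_{i+1}=e$ or $e_{i+1} \in C_e(i)$ (the second bullet in the definition of $Q_{A,B}$ cancels off any discontinuity at $i=\tau_{A,B}$). Hence
\[\ev{\Delta Q_{A,B}(i)\mid \ca{F}_i}
= -\sum_{e\text{ open w.r.t.\ }A,B}\frac{1+|C_e(i)|}{|O(i)|}
= -Q_{A,B}(i)\,\frac{rt^{r-1}}{s}\bigl(1+O(N^{-\Omega(\gamma)})\bigr),\]
using (\ref{eq:open})--(\ref{eq:degree}), $s=N/D^{1/r}$, and the fact that $q(t)\ge N^{-\zeta^r}$ throughout. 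Comparing with the discrete derivative $q(t+1/s)M-q(t)M = -c(t)M/s + O(M/s^2)$, the identity $q'(t)=-c(t)$ makes the leading terms match; meanwhile the drift of the error bar
\[\Delta\!\left[\frac{f(t)}{n^\ep}\,M\right] = \frac{M\,W(rt^{r-1}+1)f(t)}{n^\ep s} + O\!\left(\frac{M}{n^\ep s^2}\right)\]
dominates both the $O(N^{-\Omega(\gamma)})$ relative error in the main drift (since $W \gg 1$ and $N^{-\gamma+\zeta^r} \ll n^{-\ep}$ via $\ep \ll \gamma$) and the self-correcting contribution $(f(t)/n^\ep)M\cdot rt^{r-1}/s$ arising when $Q_{A,B}$ sits at the boundary of the allowed band. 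This establishes the supermartingale property.

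The main obstacle is bounding the per-step change $|\Delta Q_{A,B}(i)|$. A $T^{(r)}$ containing the edges $e_{i+1}$ and $f$ falls into two types: either both edges are base edges of the $T^{(r)}$ sharing its spine $R \in \tbinom{[n]}{r-1}$, with the remaining $r-1$ edges of the $T^{(r)}$ already in $G(i)$; or one of $e_{i+1},f$ is the apex edge and the other a base edge on spine $R \subseteq e_{i+1}$, with the remaining $r-1$ base edges in $G(i)$. Since $f \subseteq A\cup B$ meets both $A$ and $B$, the number of such $f$ for a fixed $R\subseteq e_{i+1}$ is bounded by $|N_i(R) \cap (A\cup B)|$ in the first type and by $\tbinom{|N_i(R)\cap(A\cup B)|}{r-1}$ in the second. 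While $i<\tau_{A,B}$, whenever $N_i(R)$ meets both $A$ and $B$, the stopping-time clause forces $|N_i(R)\cap(A\cup B)| < k/n^{2\ep}$, yielding the bound $\tilde O\bigl((n\log n)^{(r-1)/r}/n^{2(r-1)\ep}\bigr)$ on these terms. The remaining configurations---where $N_i(R)$ meets only one of $A$ or $B$---are controlled by an auxiliary pseudorandomness lemma (provable by the same differential equations method) that says $|N_i(R)\cap S| = \mathrm{polylog}(n)$ throughout the process for every fixed $\l$-set $S$, which gives a much smaller contribution.

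With $|\Delta Q_{A,B}(i)| \le C := \tilde O\bigl((n\log n)^{(r-1)/r}/n^{2(r-1)\ep}\bigr)$ in hand, the total variance is at most $V := C\cdot M$ because $Q_{A,B}$ decreases by at most $M$ over the entire process (so $\sum_i \ev{|\Delta Q_{A,B}(i)|\mid\ca{F}_i} \le M$). Freedman's inequality applied to the stopped supermartingales $Q^\pm_{A,B}$ with target deviation $(f(t)/n^\ep)M = \Omega(n^{1-\ep}\log n)$ then yields failure probability $\exp\bigl(-\Omega(n^{\Omega(\ep)})\bigr)$; since our constant hierarchy $1/W \ll \ep \ll \gamma$ gives $n^{\Omega(\ep)} \gg n^{1/r}\log^{1+1/r} n$, the union bound over all $(A,B)$ completes the proof.
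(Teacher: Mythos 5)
Your overall framework matches the paper's: one-sided error supermartingales built from $Q_{A,B}$, stopped at $\tau_{A,B}$ and at failures of $\ca{T}_i$, a trend computation in which the $W$-driven growth of $f$ absorbs the relative errors, and a union bound over the $n^{O(k)}$ pairs $(A,B)$. The drift computation is essentially right, though you should note that the correct identity is $\ev{\Delta Q_{A,B}\mid\ca{F}_i}=-\sum_e |C_e(i)\setminus O_\tau|/|O(i)|$ where $O_\tau$ is the set of open $r$-sets whose selection triggers $\tau_{A,B}=i+1$; one must check $|O_\tau|=O(n^{2\ep}k)\ll N^{-\gamma}D^{1/r}$ so this correction is absorbed into the error term (the paper does this via the codegree bound from Lemma~\ref{lem:subgraph}).

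The genuine gap is in the boundedness hypothesis. First, your case decomposition omits the configuration in which $e_{i+1}$ plays the role of the apex edge $a$ and $f$ a base edge $b_1$: there the spine $R$ is an $(r-1)$-subset of $f\subseteq A\cup B$, not of $e_{i+1}$, so neither the stopping-time clause nor a per-spine count over $R\subseteq e_{i+1}$ applies. The paper handles this case by a separate subgraph-counting argument (via Lemma~\ref{lem:subgraph}: no $(r-2)$-set of $A\cup B$ has $4r$ extensions whose neighborhoods all contain $e_{i+1}\setminus\{v\}$), yielding $O(k^{r-2})$ such $f$. Second, and more seriously, the ``auxiliary pseudorandomness lemma'' you invoke --- $|N_i(R)\cap S|=\mathrm{polylog}(n)$ for \emph{every} $\l$-set $S$ --- is false: $N_i(R)$ can have size $\Theta((n\log n)^{1/r})=\Theta(k)$, and the union bound ranges over all $S$, including sets containing $N_i(R)$. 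This is exactly the obstacle the paper flags before introducing $\tau_{A,B}$, and it is fatal here: if $|N_i(R)\cap A|=\Theta(k)$ your second-type count becomes $\Theta(k^{r-1})$, and a one-step bound of that size gives a deviation probability only of the form $\exp\{-kn^{-2\ep+o(1)}\}$, which loses to the $n^{2k}$ union bound. The paper's fix is to exploit the second bullet of the definition of $Q_{A,B}$: one may assume $e_{i+1}\notin O_\tau$ (else $\Delta Q_{A,B}=0$), and then for a spine $R\subseteq e_{i+1}$ with apex $f\in O_{A,B}(i)$ the set $N_{i+1}(R)\supseteq f$ meets both $A$ and $B$, so $e_{i+1}\notin O_\tau$ forces $|N_i(R)\cap(A\cup B)|<k/n^{2\ep}$ even when $N_i(R)$ meets only one of $A,B$ at step $i$. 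You need this observation (or an equivalent substitute) to make the boundedness constant $O(k^{r-1}/n^{\Omega(\ep)})$ and close the proof.
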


\begin{lemma}\label{lem:largeopensets}
With high probability, for every step $0 \le i < \im$ and every set $K \in \bin{[n]}{k}$, there exists a pair of disjoint $\l$-sets $A,B$ contained in $K$ for which $\tau_{A,B} > i$.
\end{lemma}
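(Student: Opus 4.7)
The plan is to apply a union bound over $K\in\binom{[n]}{k}$ and steps $i<\im$. For each fixed $(K,i)$, I aim to show that the probability that $K$ is \emph{bad at $i$}---meaning $\tau_{A,B}\le i$ for every disjoint pair $(A,B)\subseteq K$ of $\ell$-sets---is at most $\exp(-\omega(k\log n))$, so the union bound yields $o(1)$.

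Fix $(K,i)$. For each $X\in\binom{[n]}{r-1}$ set $C_X:=N_i(X)\cap K$ and call $X$ \emph{heavy} if $|C_X|\ge s:=\lceil k/n^{2\ep}\rceil$. If $(A,B)$ is chosen uniformly at random among disjoint $\ell$-pairs of $K$, then for any heavy $X$ with $|C_X|\le s(1+\gamma/(2\kappa))$ the probability that $X$ witnesses $(A,B)$ is at most $\exp(-\Omega(s))$: the event $|C_X\cap(A\cup B)|\ge s$ is equivalent to having at most $|C_X|-s$ cluster vertices land in the slack $K\setminus(A\cup B)$, a lower-tail deviation of order $\Omega(s\gamma/\kappa)$ from the hypergeometric mean $|C_X|\gamma/\kappa$, which Chernoff controls by the stated exponential bound. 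Consequently, if $K$ is bad then either some $X$ is \emph{very heavy} with $|C_X|>s(1+\gamma/(2\kappa))$, or at least $M:=\exp(\Omega(s))$ distinct merely-heavy $X$'s are simultaneously realized inside $K$ (since the random-partition expectation of witnesses must be $\ge 1$).

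The many-heavy regime is controlled via Lemma~\ref{lem:subgraph}. After a routine combinatorial verification that $M$ stars with distinct $(r-1)$-set centres $X_1,\ldots,X_M$ and suitably chosen leaf sets $S_1,\ldots,S_M\subseteq K$ form a $T^{(r)}$-free hypergraph (the only way two stars can combine into a $T^{(r)}$ is through a rare alignment that can be absorbed into lower-order terms), Lemma~\ref{lem:subgraph} bounds the expected number of realized $M$-tuples by $\bigl(\binom{n}{r-1}\binom{k}{s}(i/N)^s\bigr)^M/M!$. Computing $\binom{k}{s}(i/N)^s\le(e\zeta n^{-\alpha}(\log n)^{1/r})^s$ with $\alpha:=(r-1)/r-2\ep>0$ and applying Markov, this yields $\Pr[\ge M\text{ heavy stars realized in }K]\le\exp(-\Omega(Ms\log n))$. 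With $M=\exp(\Omega(s))$ this is $\exp(-e^{\Omega(s)}s\log n)$, which dwarfs $\log\binom{n}{k}=O(k\log n)$ since $\exp(\Omega(s))$ exceeds every polynomial in $n$ (as $s\gg\log n$); the union bound over $K$ then closes.

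The main obstacle is the very-heavy case, where the random-partition bound breaks down since a single $X$ with $|C_X|$ substantially exceeding $s$ can witness almost all pairs. Using the max-degree concentration for the $T^{(r)}$-free process (derivable from the dynamic concentration in \cite{BB}) one has $|N_i(X)|=O((n\log n)^{1/r})=O(k)$, so $|C_X|\le O(k)$. For a very-heavy $X$ with $|C_X|\le k-\ell$, a good pair can be constructed explicitly by arranging $C_X\subseteq A\cup(K\setminus(A\cup B))$ so that $B\cap C_X=\emptyset$ and $X$ cannot witness; clusters with $|C_X|>k-\ell$ are so extreme that a direct application of Lemma~\ref{lem:subgraph} with $|S|=k-\ell$ gives a first-moment bound small enough to survive the $\binom{n}{k}$ union bound. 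The delicate part is handling several simultaneously very-heavy $X$'s, whose clusters may not all fit into a common $A\cup\mathrm{slack}$: an iterated stratification of $(r-1)$-sets by cluster size, combined with a refined counting of realized stars within each stratum, is needed to show that the event ``too many very-heavy $X$'s to accommodate'' is itself of probability $o(1)$ per $K$.
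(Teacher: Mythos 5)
There is a genuine gap, and it sits exactly where you flag ``the delicate part.'' Your dichotomy reduces (after noting that $M=\exp(\Omega(s))$ with $s=k/n^{2\ep}=n^{1/r-2\ep+o(1)}$ exceeds $\binom{n}{r-1}$, so the ``many merely-heavy'' branch is vacuous and its elaborate first-moment computation is moot) to the statement: if $K$ has no good pair, then some $(r-1)$-set $X$ is very heavy. But this event is \emph{not} rare per $K$ --- at step $\im$ a typical $(r-1)$-set has degree $\Theta(\zeta(n\log n)^{1/r})\gg k/n^{2\ep}$, so for a positive proportion of $k$-sets $K$ there are many very-heavy $X$'s --- hence the very-heavy case cannot be killed by a union bound and must be handled by an explicit construction of a good pair $(A,B)$. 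That construction, in the presence of \emph{several} very-heavy $X$'s whose traces on $K$ may overlap, is the entire content of the lemma, and you leave it as an unspecified ``iterated stratification.'' A secondary but real flaw: you invoke Lemma~\ref{lem:subgraph} for configurations with $s$, and even $Ms$, edges, whereas that lemma is stated only for a \emph{constant} number $L$ of edges; the paper never uses it outside the constant-size regime.

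For comparison, the paper closes exactly this gap with two concrete probabilistic inputs followed by a short deterministic greedy argument. First, Lemma~\ref{lem:maxdeg} gives $\Delta_{r-1}(G(i))\le\ep(n\log n)^{1/r}\ll\l$, and Lemma~\ref{lem:subgraph} (applied to a configuration of constant size $10r$) gives that every pair of $(r-1)$-sets has codegree at most $5r$. The codegree bound then forces the family $\ca{X}$ of heavy $(r-1)$-sets to have size less than $2n^{2\ep}$ (inclusion--exclusion), and after deleting the set $K_{bad}$ of at most $20rn^{4\ep}$ vertices covered by two members of $\ca{X}$, the neighborhoods $N_i(X)\cap K_{good}$ are pairwise disjoint. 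One then greedily packs a maximal union of these neighborhoods into $A$; the max-degree bound caps the overshoot by $\ep(n\log n)^{1/r}<(\gamma/2)(n\log n)^{1/r}$, leaving room to place $B$ disjoint from every remaining neighborhood, so each $X\in\ca{X}$ misses $A$ or misses $B$ and $\tau_{A,B}>i$. You would need to supply an argument of comparable substance (or reproduce this one) before your proposal constitutes a proof.
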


\noindent
Lemmas \ref{lem:relopentraj} and \ref{lem:largeopensets}, respectively, complete steps
1 and 2 of the proof outlined above.  The `bad' condition for a pair $A,B$ of disjoint
$\ell$-sets is the event
that we have reached the stopping time $ \tau_{A,B}$; that is, the bad condition is that there
is some $ (r-1)$-set whose neighborhood intersects both $A$ and $B$ and has large intersection
with $A \cup B$.  Note that if $ i < \tau_{A,B} $ then $ Q_{A,B}$ is equal to the number of
open $r$-sets that are contained in $ A \cup B$ and intersect both $A$ and $B$.  Thus,
Lemma~\ref{lem:relopentraj} says that if we do not have the `bad' condition then we have the `right' number of
such sets.  Lemma~\ref{lem:largeopensets} then says that every $k$-set contains a pair disjoint pair $ A,B$ of
$ \ell$-sets for which the `bad' condition does not hold.  Taken together,
Lemmas~\ref{lem:relopentraj}~and~\ref{lem:largeopensets} yield that w.h.p., for every
step $0 \le i < \im$, every $k$-set contains at least
$q(t)(1 + o(1))[\bin{2\l}{r}-2\bin{\l}{r}] = \Omega \lp q(t) \binom{k}{r} \rp$ open $r$-sets, as required.
 We now prove Lemma~\ref{lem:largeopensets} modulo the proof of Lemma~\ref{lem:maxdeg} which bounds the
maximum degree of an $(r-1)$-set. Lemmas~\ref{lem:relopentraj}~and~\ref{lem:maxdeg} are proved in the
next Section.

\begin{proof}[Proof of Lemma~\ref{lem:largeopensets}]

We require a bound on the maximum degree of $(r-1)$-sets of vertices.  For each step $i \ge 0$ let $\ca{D}_{i}$ denote the event that $\Delta_{r-1}(G(i)) \le \ep (n \log n)^{1/(r-1)}$.
\begin{lemma}\label{lem:maxdeg}
$\ca{T}_{\im} \land \ca{D}_{\im}$ holds with high probability.
\end{lemma}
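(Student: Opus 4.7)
Since $\ca{T}_{\im}$ has already been shown to hold w.h.p., the plan is to show that, on the event $\ca{T}_{\im}$, the bound $\ca{D}_{\im}$ also holds w.h.p., and take the intersection. Because $d_i(R)$ is monotone nondecreasing in $i$, it suffices to control $d_{\im}(R)$ for each fixed $(r-1)$-set $R$ and then union bound over the $\binom{n}{r-1} \le n^{r-1}$ choices.

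Fix $R \in \binom{[n]}{r-1}$ and let $X_i \in \{0,1\}$ indicate the event $R \subseteq e_i$, so that $d_{\im}(R) = \sum_{i=1}^{\im} X_i$. On $\ca{T}_{\im}$ we have $|O(i)| \ge q(t(i)) N/2$, using (\ref{eq:open}) together with $N^{-\gamma} \ll q(\tm) = N^{-\zeta^r}$ (which holds because $\zeta \ll \gamma$). Since at most $n - r + 1$ open $r$-sets can contain $R$, it follows that $\pr{X_{i+1} = 1 \mid \ca{F}_i} \le 2(n-r+1)/(q(t(i)) N)$ on $\ca{T}_{\im}$. Summing this bound and converting to a Riemann integral via the scaling $s = N/D^{1/r}$ yields
\[
\sum_{i=0}^{\im - 1} \pr{X_{i+1} = 1 \mid \ca{F}_i} = O\!\left( \frac{n}{D^{1/r}} \int_0^{\tm} e^{t^r}\,dt \right) = O\!\left( n^{1/r} N^{\zeta^r} \right) = O\!\left( n^{1/r + r \zeta^r} \right),
\]
where the last step uses $N \le n^r$. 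Because the hierarchy (\ref{eq:constants}) permits $\zeta$ to be chosen so small that $r\zeta^r < 1/(r(r-1))$, this cumulative conditional mean is polynomially smaller in $n$ than the target $\ep(n\log n)^{1/(r-1)}$.

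Given this polynomial gap between the cumulative conditional mean and the target, I would apply a Freedman-type martingale inequality (or, equivalently, a multiplicative Chernoff bound for $\{0,1\}$-valued increments via stochastic domination by independent Bernoullis) to obtain
\[
\pr{d_{\im}(R) \ge \ep (n\log n)^{1/(r-1)} \text{ and } \ca{T}_{\im}} \le \exp\!\left( -\Omega\!\left( (n \log n)^{1/(r-1)} \log n \right) \right).
\]
This super-polynomial tail absorbs the $\binom{n}{r-1}$ union bound over $R$, so $\ca{D}_{\im}$ holds w.h.p.\ on $\ca{T}_{\im}$, as required.

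The main obstacle is really constant-chasing: one must confirm that $\zeta$ in (\ref{eq:constants}) is small enough to force $r\zeta^r < 1/(r(r-1))$, thereby separating the cumulative mean of $d_{\im}(R)$ from the target by a polynomial factor in $n$. Since $\zeta$ is the smallest constant in the hierarchy (\ref{eq:constants}), this is guaranteed. Once this polynomial mean/target gap is in place, the concentration and union bound steps are standard, and the $(r-1)$-degree bound follows.
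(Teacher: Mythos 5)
Your argument is internally sound and does prove the lemma as literally stated: the first-moment bound $\sum_i Q_A(i)/|O(i)| \le \sum_i 2n/(q(t)N) = O(n^{1/r}N^{\zeta^r+o(1)})$ is correct, the constant-chasing $r\zeta^r < 1/(r(r-1))$ is legitimate under the hierarchy (\ref{eq:constants}), and a stopped-martingale Chernoff/Freedman bound then beats the union bound over $\binom{n}{r-1}$ sets. This is a genuinely different and far more elementary route than the paper, which runs the full differential equations method with three coupled variables $Y_A^+, Y_A^-, Z_A$ and a stopping time.

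However, there is a gap relative to what the lemma must deliver. The exponent $1/(r-1)$ in the definition of $\ca{D}_i$ is evidently a typo for $1/r$: the paper's proof actually establishes $\Delta_{r-1}(G(\im)) \le \tm D^{-1/r}n + f(\tm)n^{1/r-\ep/2} = O(\zeta (n\log n)^{1/r}) \le \ep(n\log n)^{1/r}$, and it is this sharper bound that the proof of Lemma~\ref{lem:largeopensets} uses --- specifically in the step $\ell < |N(\ca{X}^*\cup\{X^*\})| < \ell + \ep(n\log n)^{1/r}$, after which an $\ell$-set $B$ must still be extractable from $K_{good}\setminus N(\ca{X}^*\cup\{X^*\})$, where $|K_{good}| \ge 2\ell + (\gamma/2)(n\log n)^{1/r}$; a degree bound of $\ep(n\log n)^{1/(r-1)} \gg (n\log n)^{1/r}$ would destroy that count. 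Your approach cannot be pushed to the $1/r$ exponent: replacing $Q_A(i)$ by the trivial bound $n-r+1$ throws away the factor $q(t)$ by which the open sets containing $A$ decay, and the resulting cumulative mean $n^{1/r}N^{\zeta^r}$ exceeds $(n\log n)^{1/r}$ by the polynomial factor $n^{r\zeta^r}$. Recovering the correct order $\Theta(n^{1/r}\log^{1/r}n)$ for $d_{\im}(A)$ requires knowing $Q_A(i) = (q(t)+o(1))n$ throughout the process, which is precisely the role of the variables $Y_A^{\pm}$ in the paper's proof (and the reason the paper remarks that the bounds on $Y_A^{\pm}$ are necessary for the bound on $Z_A$). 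So while your proof verifies the statement as printed, it does not yield the bound the rest of the paper relies on, and the missing idea is exactly the dynamic tracking of $Q_A(i)$.
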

\noindent
The proof of Lemma~\ref{lem:maxdeg} is given in the next Section.

Fix a step $0 \le i < \im$, and a set $K \in \bin{[n]}{k}$.  Note that, by Lemma~\ref{lem:maxdeg},
we may assume that $\ca{D}_i$ holds.  We also note that the maximum co-degree of a pair of sets
$ A,B \in \binom{[n]}{r-1} $ is at most $ 5r$ with high probability.  This follows from Lemma~\ref{lem:subgraph}
and the union bound:
\begin{multline}
\label{eq:codegree}
\Pr \lp \exists A,B \in \binom{[n]}{r-1} \text{ with co-degree } 5r \rp \le \binom{n}{r-1} \binom{n}{r-1} n^{5r}  \left(\frac{i}{N}\right) ^{10r} \\ = n^{8-3r +o(1)}  = o(1).
\end{multline}
Given these two facts (i.e. these degree and co-degree bounds for $ (r-1)$-sets),
the remainder of the proof is deterministic.

To begin, define the set
\[\ca{X}:=\left\{X \in \bin{[n]}{r-1} : |N_i(X)\cap K| \ge k/n^{2\ep}\right\}.\]

\begin{claim}
$|\ca{X}| < 2 n^{2\ep}$.
\end{claim}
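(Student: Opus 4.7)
The plan is a Cauchy--Schwarz double-counting argument driven by the co-degree bound (\ref{eq:codegree}). Write $m := |\ca{X}|$ and, for each $v \in K$, let $d(v) := |\{X \in \ca{X} : v \in N_i(X)\}|$, so that
\[ S \;:=\; \sum_{v \in K} d(v) \;=\; \sum_{X \in \ca{X}} |N_i(X) \cap K|.\]
The definition of $\ca{X}$ immediately supplies the lower bound $S \ge mk/n^{2\ep}$.

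For the matching upper bound, I would expand
\[ \sum_{v \in K} d(v)^2 \;=\; S \;+\; 2\!\!\sum_{\{X_1,X_2\} \subseteq \ca{X}}\!\! |N_i(X_1) \cap N_i(X_2) \cap K|.\]
Equation (\ref{eq:codegree}) lets us assume that every pair of distinct $(r-1)$-sets has co-degree strictly less than $5r$ in $G(i)$, so the pair-sum is at most $5r\binom{m}{2} \le \tfrac{5r}{2} m^2$. Cauchy--Schwarz $S^2 \le k \sum_{v \in K} d(v)^2$ then yields the quadratic inequality $S(S - k) \le 5krm^2$.

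I would finish by a case split. If $S \ge 2k$, then $S - k \ge S/2$, so $S^2/2 \le 5krm^2$, i.e., $S \le m\sqrt{10kr}$. Combined with $S \ge mk/n^{2\ep}$ this forces $k \le 10rn^{4\ep}$, which contradicts $k = \kappa(n\log n)^{1/r}$: the constants hierarchy (\ref{eq:constants}) gives $\ep \ll 1/r$, in particular $4\ep < 1/r$, and hence $n^{4\ep} = o(k)$ as $n\to\infty$. Therefore $S < 2k$, whence $mk/n^{2\ep} \le S < 2k$ and $m < 2n^{2\ep}$, as desired. The only point that requires care is verifying that the large-$S$ regime is genuinely ruled out by the relation among $\kappa$, $\ep$ and $r$ built into (\ref{eq:constants}); everything else is Cauchy--Schwarz plus the co-degree input already established.
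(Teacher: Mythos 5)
Your proof is correct, and it uses exactly the same two probabilistic inputs as the paper does at this point (the degree lower bound $|N_i(X)\cap K|\ge k/n^{2\ep}$ built into the definition of $\ca{X}$, and the co-degree bound $5r$ from (\ref{eq:codegree})), but combines them by a different elementary mechanism. The paper truncates: it takes a subfamily $\ca{Y}\subseteq\ca{X}$ of size exactly $2n^{2\ep}$ and applies Bonferroni (inclusion--exclusion to two terms) to $\bigl|\bigcup_{Y\in\ca{Y}}(N_i(Y)\cap K)\bigr|$, getting $k\ge 2k-20rn^{4\ep}$, a contradiction. You instead run Cauchy--Schwarz on the degree sequence $d(v)$ over all of $\ca{X}$ at once, arriving at the quadratic inequality $S(S-k)\le 5krm^2$ and a short case split on whether $S\ge 2k$. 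Both arguments hinge on the same numerical fact $n^{4\ep}=o(k)$ (from $4\ep<1/r$ and $k=n^{1/r+o(1)}$; the constant $\kappa$ is irrelevant here). What your route buys is that it bounds $|\ca{X}|$ directly without the ``suppose a subfamily of size exactly $2n^{2\ep}$ exists'' step; the cost is the extra case analysis needed to extract a linear bound on $S$ from the quadratic inequality, which you handle correctly (including, implicitly, the trivial case $m=0$). Either argument is a standard way to show that a family of large, pairwise almost-disjoint subsets of a $k$-set must be small.
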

\begin{quote}
\begin{proof}
Suppose $\exists \ca{Y} \subseteq \ca{X}$ with $|\ca{Y}| = 2n^{2\ep}$. Let $N = \bigcup_{Y \in \ca{Y}} (N_i(Y) \cap K)$.  By inclusion-exclusion,
\[k \ge |N| \ge |\ca{Y}|\cdot (k /n^{2\ep}) - |\ca{Y}|^2 5r \ge 2k-20rn^{4\ep},\]
a contradiction as $\ep$ is small and $k = n^{1/r + o(1)}$.
\end{proof}
\end{quote}

Next, we `discard' from $K$ the vertices which are common neighbors of $(r-1)$-sets in $\ca{X}$: let
$$K_{bad}:=\{v \in K : \exists X,Y \in \ca{X} \mbox{ with } X \ne Y \mbox { and } v \in N_i(X)\cap N_i(Y)\}$$ and $K_{good}:=K\setminus K_{bad}$. Then
\[|K_{bad}| \le |\ca{X}|^2 5r\le 20rn^{4\ep} < \frac{\gamma}{2}\cdot (n \log n)^{1/r},\]
say, for large $n$.

We find disjoint $\l$-subsets $A,B$ of $K_{good}$ as follows, noting $|K_{good}| \ge 2\ell + (\gamma/2)(n \log n)^{1/r}$.  For each subset $\ca{Y}\subseteq \ca{X}$, let
\[N({\ca{Y}}) = \bigcup_{Y \in \ca{Y}} N_i(Y) \cap K_{good}.\]
Now, choose a maximal subset $\ca{X}^* \subseteq \ca{X}$ subject to $|N(\ca{X}^*)| \le \l$.  If $\ca{X}^*=\ca{X}$, then let $A,B$ be $\l$-sets satisfying $N(\ca{X}^*)\subseteq A \subseteq K_{good}$ and $B \subseteq K_{good}\setminus A$.

Otherwise, pick any set $X^*\in \ca{X}\setminus \ca{X}^*$, so
\[\ell < |N(\ca{X}^* \cup \{X^*\})| < \ell + \ep (n \log n)^{1/r};\]
let $A \subseteq N(\ca{X}^* \cup \{X^*\})$ and $B \subseteq K_{good}\setminus N(\ca{X}^* \cup \{X^*\})$ be $\ell$-sets.

Observe now that if $\ca{X}^*=\ca{X}$, then $N_i(X)\cap B=\emptyset$ for all $X \in \ca{X}$.  Otherwise, if $X \in \ca{X}^*\cup \{X^*\}$, $N_i(X)\cap B=\emptyset$, but if $X \in \ca{X}\setminus (\ca{X}^*\cup \{X^*\})$ then $N_i(X)\cap A=\emptyset$ as we are working within $K_{good}$.  In either case, for every $(r-1)$-set $X$ for which $|N_i(X) \cap (A \cup B)| \ge k/n^{2\ep}$ holds, either $N_i(X)\cap A = \emptyset$ or $N_i(X)\cap B = \emptyset$, and $\tau_{A,B} > i$ follows.

\end{proof}

\section{Dynamic Concentration}

In this section we prove Lemmas~\ref{lem:relopentraj}~and~\ref{lem:maxdeg}.  Both of
these statements assert dynamic concentration of key parameters of the $T^{(r)}$-free process.  We apply
the differential equations method for proving dynamic concentration, which we now briefly sketch.

Suppose we have a combinatorial stochastic process based on a ground set of size $n$ that generates
a natural filtration $ \ca{F}_0, \ca{F}_1, \dots $.  Suppose further that we
have a sequence of random variables $ A_0, A_1, \dots$ and that we would like to prove a dynamic concentration
statment of the form
\begin{equation}
\label{eq:dynamic}
A_i \le T_i + E_i  \ \ \text{ for all } \ \  0 \le i \le m(n) \ \   \text{ with high probability},
\end{equation}
where $ T_0, T_1, \dots $ is the expected trajectory of the sequence of random variables $A_i$
and $ E_0, E_1, \dots $ is a sequence of error functions.  (One is often interested in proving a lower
bound on $A_i$ in conjunction with (\ref{eq:dynamic}).  The argument for proving this is essentially
the same as the upper bound argument
that we discuss here.)   We often make this statement in the context
of a limit that we define in terms of a continuous time $t$ given by $ t = i/s$ where $ s$ is the
{\bf time scaling} of the process.  The limit of the expected trajectory is determined
by setting $ T_i = f(t) S(n) $ where $ S = S(n)$ is
the {\bf order scaling} of the random variable $A_i$.  Given these assumptions we should have
\[  \ev{A_{i+1} - A_i \mid  \ca{F}_i } = T_{i+1} - T_i =  \left[ f(t+1/s) - f(t) \right] S \approx f'(t) \cdot \frac{S}{s}.\]
Thus the trajectory is determined by the expected one-step change in $ A_i$.

We prove (\ref{eq:dynamic}) by applying facts regarding the probability of large deviations in
martingales with bounded differences.  In particular, we consider the sequence
\[  D_i = A_i - T_i - E_i. \]
Note that if we set $ T_0 = A_0$ (which is often the natural initial condition) then
$ D_0 = - E_0 $.  If we can establish that the sequence $ D_i$ is a supermartingale
and $ E_0$ is sufficiently large then it should be unlikely that $ D_i$ is ever positive,
and (\ref{eq:dynamic}) follows.  In order to complete such a proof we
show that the sequence $D_i$  is a supermartingale, a fact that is sometimes called the {\bf trend
hypothesis} (see Wormald \cite{W}).  The trend hypothesis will often impose a condition that the sequence of
error functions $ E_i$ is growing sufficiently quickly (i.e. the derivative of the limit of error
function is sufficiently large).  We then show that the one-step changes in $ D_i$ are bounded in
some way (this is sometimes called the {\bf boundedness hypothesis}). This puts us in the
position to apply a martingale inequality.  In order to get good bounds from the martingale inequality one
generally needs to make $ E_0$ large.

In this section we appeal to the following pair of martingale inequalities (see \cite{B}).
For positive reals $b,B$, the sequence $A_0,A_1,\ldots$ is said to be {\bf $(b,B)$-bounded} if $A_i - b \le A_{i+1} \le A_i + B$ for all $i \ge 0$.
\begin{lemma}\label{lem:submartinbd}
Suppose $b \le B/10$ and $0<a<bm$.  If $A_0,A_1,\ldots$ is a $(b,B)$-bounded submartingale, then $\pr{A_m \le A_0 - a} \le \exp \left\{ -a^2/3b m B \right\}$.
\end{lemma}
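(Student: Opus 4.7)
The plan is to prove the lower-tail bound by the standard exponential-moment / Markov inequality approach tailored to submartingales.  Fix a parameter $\lambda > 0$ (to be optimized at the end) and set $\Delta_i := A_{i+1} - A_i$, so that $-b \le \Delta_i \le B$ by the boundedness hypothesis and $\ev{\Delta_i \mid \ca{F}_i} \ge 0$ by the submartingale hypothesis.  The main step is a single-step estimate of $\ev{e^{-\lambda \Delta_i} \mid \ca{F}_i}$; chaining $m$ such bounds by iterated conditioning and then applying Markov's inequality to $e^{-\lambda(A_m - A_0)}$ will yield the desired concentration inequality.

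To bound the single-step exponential moment, I would use the convexity chord bound: since $x \mapsto e^{-\lambda x}$ is convex on $[-b, B]$,
$$e^{-\lambda x} \le \frac{B-x}{B+b} e^{\lambda b} + \frac{x+b}{B+b} e^{-\lambda B} \qquad \text{for } x \in [-b, B].$$
Taking conditional expectations, the right-hand side becomes a linear function of $\ev{\Delta_i \mid \ca{F}_i}$ with negative slope (since $e^{\lambda b} > e^{-\lambda B}$ when $\lambda > 0$), so the submartingale bound $\ev{\Delta_i \mid \ca{F}_i} \ge 0$ yields
$$\ev{e^{-\lambda \Delta_i} \mid \ca{F}_i} \le \frac{B}{B+b} e^{\lambda b} + \frac{b}{B+b} e^{-\lambda B}.$$
The right-hand side is the moment generating function (evaluated at $\lambda$) of a mean-zero random variable supported on $\{-B, b\}$ with variance $bB$.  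Using the elementary Taylor bounds $e^{y} \le 1 + y + y^{2}$ and $e^{-y} \le 1 - y + y^{2}/2$ valid for $y \in [0,1]$, a short calculation shows that the right-hand side is at most $1 + \lambda^2 bB \le e^{\lambda^2 bB}$, provided $\lambda B \le 1$.

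Iterating through the $m$ steps via the tower property yields $\ev{e^{-\lambda(A_m - A_0)}} \le e^{\lambda^2 bBm}$, so Markov's inequality gives
$$\pr{A_m \le A_0 - a} = \pr{e^{-\lambda(A_m - A_0)} \ge e^{\lambda a}} \le \exp\bigl(-\lambda a + \lambda^2 bBm\bigr).$$
Optimizing by choosing $\lambda = \Theta(a/(bBm))$ then yields a bound of the form $\exp(-\Omega(a^2/(bBm)))$.  The hypotheses $a < bm$ and $b \le B/10$ are used precisely to ensure that $\lambda B$ is bounded by a small constant, so that the Taylor estimate above is valid and $\lambda b$ is in turn much smaller still.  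The main quantitative obstacle is sharpening the numerical coefficients so that the final exponent matches exactly $-a^{2}/(3bmB)$; this requires careful book-keeping in the Taylor step, exploiting $b \le B/10$ to replace $e^{-\lambda B} \le 1 - \lambda B + \lambda^{2} B^{2}/2$ in place of the cruder $1 - \lambda B + \lambda^{2} B^{2}$, but introduces no new ideas beyond those sketched above.
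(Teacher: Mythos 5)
Your proposal is correct and is essentially the standard Hoeffding-type exponential-moment argument: the chord (convexity) bound combined with the submartingale inequality gives $\ev{e^{-\lambda \Delta_i}\mid \ca{F}_i} \le \frac{B}{B+b}e^{\lambda b} + \frac{b}{B+b}e^{-\lambda B} \le 1 + \lambda^2 bB$, and with the Taylor estimates you cite one in fact gets the factor $\frac{b+B/2}{b+B} \le 0.6$ (using $b \le B/10$), so optimizing $\lambda = a/(1.2\,bBm)$ (which satisfies $\lambda B < 1$ by $a < bm$) yields exponent $-a^2/(2.4\,bmB)$, comfortably within the stated $-a^2/(3bmB)$. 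The paper itself does not prove this lemma but cites \cite{B}, where the proof proceeds by the same method, so there is nothing to flag.
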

\begin{lemma}\label{lem:supmartinbd}
Suppose $b \le B/10$ and $0<a<bm$. If $A_0,A_1,\ldots$ is a $(b,B)$-bounded supermartingale, then $\pr{A_m \ge A_0 + a} \le \exp \left\{ -a^2/3b m B \right\}$.
\end{lemma}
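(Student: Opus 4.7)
The plan is a standard Bennett/Bernstein-type exponential-moment argument for supermartingales with asymmetric bounded increments. Set $X_i := A_i - A_{i-1}$, so by hypothesis $\mathbb{E}[X_i \mid \mathcal{F}_{i-1}] \le 0$ and $-b \le X_i \le B$ almost surely. For any $\lambda > 0$, the exponential-Markov inequality together with iteration of conditional expectations gives
\[\pr{A_m - A_0 \ge a} \;\le\; e^{-\lambda a}\prod_{i=1}^{m}\bigl\|\mathbb{E}[e^{\lambda X_i}\mid \mathcal{F}_{i-1}]\bigr\|_\infty,\]
so the problem reduces to bounding the conditional moment generating function of a single increment.

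The crucial input is the conditional-variance bound $\mathbb{E}[X_i^2 \mid \mathcal{F}_{i-1}] \le bB$: since $(X_i+b)(B-X_i) \ge 0$ we have $X_i^2 \le (B-b)X_i + bB$, and taking conditional expectation together with $\mathbb{E}[X_i \mid \mathcal{F}_{i-1}]\le 0$ yields the claim. This asymmetric estimate is what ultimately produces $bB$ (rather than $B^2$) in the final exponent. Combining it with the elementary inequality $e^y \le 1 + y + y^2\psi(\lambda B)$, valid for every $y \le \lambda B$, where $\psi(u) := (e^u - 1 - u)/u^2$ (extended by $\psi(0)=1/2$) is increasing on $\mathbb{R}$, and substituting $y = \lambda X_i$, one obtains
\[\mathbb{E}\bigl[e^{\lambda X_i}\mid \mathcal{F}_{i-1}\bigr] \;\le\; 1 + \lambda^2 bB\,\psi(\lambda B) \;\le\; \exp\bigl(\lambda^2 bB\,\psi(\lambda B)\bigr).\]

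To finish I would take the Bennett-optimal choice $\lambda := B^{-1}\log(1+a/(mb))$, which collapses the overall exponent to $-(mb/B)\,h(a/(mb))$ for $h(x) := (1+x)\log(1+x)-x$. The hypothesis $a < bm$ forces $x := a/(mb) \in (0,1)$, and on this range the Taylor expansion $h(x) = x^2/2 - x^3/6 + x^4/12 - \cdots$ gives the clean lower bound $h(x) \ge x^2/2 - x^3/6 \ge x^2/3$, yielding
\[\pr{A_m - A_0 \ge a}\;\le\; \exp\!\left(-\frac{mb}{B}\cdot\frac{x^2}{3}\right) \;=\; \exp\!\left(-\frac{a^2}{3bmB}\right),\]
exactly as claimed.

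The main pitfall to avoid is applying a symmetric Azuma--Hoeffding bound, which would lose the asymmetry and give only $B^2$ in the denominator; the extra factor of $b/B$ is gained precisely at the variance-bound step. The stronger hypothesis $b\le B/10$ is in fact slightly more than is needed for this upper tail ($b \le B$ suffices to run the argument above), and presumably appears for stylistic uniformity with the companion statement Lemma~\ref{lem:submartinbd} on submartingale lower tails, which is proved identically after replacing $A_i$ by $-A_i$.
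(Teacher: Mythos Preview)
Your proof is correct: the Bennett-type exponential-moment argument with the conditional-variance bound $\mathbb{E}[X_i^2\mid\ca{F}_{i-1}]\le bB$ derived from $(X_i+b)(B-X_i)\ge 0$ is exactly the right engine, and your optimisation and the alternating-series bound $h(x)\ge x^2/2-x^3/6\ge x^2/3$ on $(0,1)$ are valid.  The paper itself does not prove this lemma; it simply quotes it from~\cite{B}, where essentially the same Bernstein/Bennett argument appears, so your approach is the standard one and matches what the cited source does.
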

\noindent
Our applications of these Lemmas make use of stopping times.  Formally speaking, a stopping time
is simply a postive integer-valued random variable $ \tau$ for
which $ \{ \tau \le n\} \in {\mathcal F}_n $.  In other words,
$\tau$ is a stopping time if the event $ \tau \le n$ is determined by the first $n$ steps of the
process.  We consider the stopped process $ ( D_{ i \wedge \tau }) $, where
$ x \wedge y := \min \{x,y\} $, in the place of the sequence $ D_0, D_1, \dots$.  Our stopping time $ \tau$
is the first step in the process when any condition on some short list of conditions
fails to hold, where the condition
$ D_i \le 0 $ is one of the conditions in the list.  Note that, since the variable $ ( D_{ i \wedge \tau }) $ does
not change once we reach the stopping time $ \tau$, we can assume that all conditions in the list
hold when we are proving the trend and boundedness hypotheses.  Also note that if the stopping time
$\tau'$ is simply the minimum of $ \im$ and the first step for which $ D_i >  0$ then

$ \{ D_{\im \wedge \tau'} > 0 \} $ contains the event $ \{ \exists i \le \im: D_i > 0 \}$.

\subsection{Proof of Lemma \ref{lem:maxdeg}.}

For each set $A \in \bin{[n]}{r-1}$ and step $i \ge 0$, let $O_A(i):=\{e \in O(i) : A \subseteq e\}$, and
$Q_A(i) = |O_A(i)|$.  We define sequences of random variables
\begin{align*}
Y^+_A(i) &:= q(t)\cdot n - Q_A(i) + f(t)\cdot n^{1-\ep},\\
Y^-_A(i) &:= q(t)\cdot n - Q_A(i) - f(t)\cdot n^{1-\ep},\\
Z_A(i) &:= d_i(A) - t\cdot D^{-1/r} n - f(t)q(t)^{-1}\cdot n^{1/r-\ep},
\end{align*}
Finally, we define the stopping time $\tau$ to be the minimum of $\bin{n}{r}$, the first step $i$ where $\ca{T}_i$ fails, or where any of $Y^+_A(i) < 0$, $Y^-_A(i) > 0$, or $Z_A(i) > 0$ holds for some $A \in \bin{[n]}{r-1}$.

To prove Lemma~\ref{lem:maxdeg}, we show that for each $A \in \bin{[n]}{r-1}$,
\begin{align}
\label{eq:YAplusbd} \pr{Y^+_A(\im \land \tau) < 0} &= o(n^{-(r-1)}),\\
\label{eq:YAminusbd} \pr{Y^-_A(\im \land \tau) > 0} &=o(n^{-(r-1)}), \mbox{ and }\\
\label{eq:ZAbd} \pr{Z_A(\im \land \tau) > 0} &=o(n^{-(r-1)}).
\end{align}
Consider the event
$ \tau \le \im $.  This event is the union of the event that $ \ca{ T}_{\im} $ fails and
the event that
there exists $ A \in \bin{[n]}{r-1}$ such that $ Y^+_A(\im \land \tau) < 0 $ or
$ Y^-_A(\im \land \tau) > 0 $ or $  Z_A(\im \land \tau) > 0 $.
Since $\ca{T}_{\im}$ holds with high probability, it follows from (\ref{eq:YAplusbd})--(\ref{eq:ZAbd}) and
the union bound that w.h.p. $\tau > \im$.  In particular, $Z_A(i) \le 0$ for
all $(r-1)$-sets $A$ and steps $0 \le i \le \im$.
It then follows -- since $\zeta \ll \min\{1/W,\ep \}$ implies that we may bound $f(\tm) < n^{\ep/2}$, say -- that we have
\[\Delta_{r-1}(G(\im)) \le \tm D^{-1/r}n + f(\tm) n^{1/r-\ep/2} = \zeta \cdot O( (n \log n)^{1/r}) \le \ep (n \log n)^{1/r},\]
for  $n$ sufficiently large.  (We remark in passing that
the bounds on $Y_A^{\pm}(i)$ given when $i<\tau$ are necessary for our proof of the bounds on $Z_A(i)$.)

For the remainder of this argument, fix a set $A \in \bin{[n]}{r-1}$.  We first prove \eqref{eq:YAplusbd} and \eqref{eq:YAminusbd}.
\begin{claim} \label{clm:YApmbds}
For $n$ sufficiently large, the variables $Y^+_A(0),\ldots,Y^+_A(\im \land \tau)$ form an $(O(n/s),O(n^{1 - \frac{1}{2r}}))$-bounded submartingale, and the variables $Y^-_A(0),\ldots,Y^-_A(\im \land \tau)$ form an $(O(n/s),O(n^{1 - \frac{1}{2r}}))$-bounded supermartingale.
\end{claim}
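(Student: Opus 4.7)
My plan is to analyze the stopped process $(Y^\pm_A(i \wedge \tau))$, for which the stopping hypothesis lets me freely invoke the estimates \eqref{eq:open} and \eqref{eq:degree} from $\ca{T}_i$ together with the a priori concentration $|Q_A(i) - q(t) n| \le f(t) n^{1-\epsilon}$. The main combinatorial input is that an $f \in O_A(i)$ leaves $O_A$ at step $i+1$ precisely when $e_{i+1} = f$ or $e_{i+1} \in C_f(i)$, giving
\begin{equation*}
\ev{Q_A(i) - Q_A(i+1) \mid \ca{F}_i} = \frac{1}{|O(i)|} \sum_{f \in O_A(i)} (|C_f(i)| + 1).
\end{equation*}
Substituting \eqref{eq:open} and \eqref{eq:degree}, together with $c(t)/q(t) = rt^{r-1}$ and $D^{1/r}/N = 1/s$, this reduces to $\ev{\Delta Q_A \mid \ca{F}_i} = -Q_A(i) \cdot rt^{r-1}/s \cdot (1 + O(N^{-\gamma'}))$ for some $\gamma' > 0$, where I absorb $N^{-\gamma}/q(t) \le N^{\zeta^r - \gamma}$ into the error using $\zeta \ll \gamma$.

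Next I will verify the trend hypothesis. A Taylor expansion of $q, f$ around $t = i/s$, combined with the above, shows that the leading order of $s \cdot \ev{\Delta Y^+_A \mid \ca{F}_i}$ equals
\begin{equation*}
q'(t) n + Q_A(i) \cdot rt^{r-1} + f'(t) n^{1-\epsilon},
\end{equation*}
and the identity $q'(t) = -rt^{r-1} q(t)$ collapses the first two terms to $rt^{r-1}(Q_A(i) - q(t)n)$. In the worst case for the submartingale property the stopping hypothesis gives $Q_A(i) - q(t)n \ge -f(t) n^{1-\epsilon}$, so the leading order is at least $n^{1-\epsilon}[f'(t) - rt^{r-1} f(t)]/s$. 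Direct differentiation of $f(t) = \exp\{W(t^r + t)\}$ yields
\begin{equation*}
f'(t) - rt^{r-1} f(t) = f(t)\bigl[(W-1)rt^{r-1} + W\bigr] \ge W f(t)
\end{equation*}
for $W \ge 1$; this positive buffer $W f(t) n^{1-\epsilon}/s$ dominates the $O(N^{-\gamma'} n/s)$ errors propagated from \eqref{eq:open}--\eqref{eq:degree} and the $O(n/s^2)$ Taylor remainder, once $W$ is a sufficiently large constant and $\gamma \gg \epsilon$. The supermartingale claim for $Y^-_A$ is symmetric, using $Q_A(i) - q(t)n \le f(t) n^{1-\epsilon}$.

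For boundedness I exploit that $Q_A$ is monotone non-increasing, so the only positive contributions to $Y^+_A(i+1) - Y^+_A(i)$ are the drop $Q_A(i) - Q_A(i+1)$ and the smooth increment of $f(t) n^{1-\epsilon}$; by the symmetry $f \in C_{e_{i+1}}(i) \iff e_{i+1} \in C_f(i)$, the former is at most $1 + |C_{e_{i+1}}(i)| = O(D^{1/r}) = O(n^{1-1/r})$ via \eqref{eq:degree}, comfortably $O(n^{1-1/(2r)})$, while the latter is $O(f'(\tm) n^{1-\epsilon}/s)$, much smaller. In the negative direction only the smooth term $(q(t+1/s) - q(t))n$ contributes, of size $O(rt^{r-1} q(t) n/s) = O(n/s)$ up to a polylogarithmic factor absorbed by the $O$. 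The place I expect to have to be most careful is verifying that the trend surplus $Wf(t) n^{1-\epsilon}/s$ genuinely dominates the stacked errors at $t$ near $\tm$, where $q(t) \approx N^{-\zeta^r}$ is small and the error ratio $N^{-\gamma}/q(t)$ is largest; the hierarchy $\zeta \ll 1/W \ll \epsilon \ll \gamma$ in \eqref{eq:constants} is calibrated precisely to make this balance work.
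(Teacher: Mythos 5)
Your argument is correct, and on the main point --- the trend hypothesis --- it is essentially the paper's proof: the same identity $\ev{Q_A(i)-Q_A(i+1)\mid \ca{F}_i}=\sum_{f\in O_A(i)}(|C_f(i)|+1)/|O(i)|$, the same use of the stopping time to import (\ref{eq:open}), (\ref{eq:degree}) and $Q_A(i)=q(t)n\pm f(t)n^{1-\ep}$, and the same cancellation $q'(t)=-rt^{r-1}q(t)$ leaving a surplus $f'(t)-\Theta(rt^{r-1})f(t)\ge \Omega(W)f(t)$ that swallows the $N^{-\gamma}/q(t)$ and Taylor errors via the hierarchy $\zeta\ll 1/W\ll\ep\ll\gamma$. (Two cosmetic points: the additive $+1$ terms contribute $Q_A(i)/|O(i)|$, which is not a multiplicative $(1+O(N^{-\gamma'}))$ correction near $t=0$ where $c(t)=0$, but it is $o(f(t)n^{1-\ep}/s)$ and so is absorbed by the same buffer; and $c(t)=rt^{r-1}q(t)$ is bounded by an absolute constant, so no polylogarithmic factor actually arises in the $O(n/s)$ lower bound.) Where you genuinely diverge is the boundedness upper bound: you bound $Q_A(i)-Q_A(i+1)\le 1+|C_{e_{i+1}}(i)|=O(D^{1/r})=O(n^{1-1/r})$ directly from (\ref{eq:degree}), valid since $e_{i+1}\in O(i)$ and $\ca{T}_i$ holds for $i<\tau$, whereas the paper instead bounds $|C_{e_{i+1}}(i)\cap O_A(i)|$ through a five-case analysis on $|e_{i+1}\cap A|$ using the max-degree bound $Z_A\le 0$ and the structure of $T^{(r)}$, arriving at the same order $n^{(r-1)/r+o(1)}$. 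Your shortcut is simpler and suffices for the stated $O(n^{1-\frac{1}{2r}})$ bound and for the subsequent application of Lemmas \ref{lem:submartinbd} and \ref{lem:supmartinbd}; what the paper's finer case analysis buys is only structural information about which selections of $e_{i+1}$ can cause large one-step drops (essentially only $|e_{i+1}\cap A|=r-1$), which is not exploited further.
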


\begin{proof}
We begin by fixing a step $0 \le i \le \im$, and we assume that $i < \tau$.  Throughout we write $t=t(i)$, and note $t(i+1)=t+s^{-1}$ and that $s^{-1}=D^{1/r}/N=\Theta(n^{1-1/r-r})$.

To aid the calculations to follow, we begin by estimating the quantity $$\Xi := f(t+s^{-1}) - f(t).$$  Since $f(t)=\exp(Wt^r + Wt)$, $f'(t)$ and $f''(t)$ are products of $f(t)$ with polynomials in $t$.  As $\zeta \ll \max\{1/W,\ep\}$,  $\tm$ is polylogarithmic in $n$, and $n$ is large, we have the crude bounds $ f(t) \le n^{\epsilon/2} $ and
$f''(t) \le  n^{o(1)} f'(t) $.  Thus, by Taylor's Theorem,
\begin{equation}\label{eq:Xiestimate}
\left|\Xi - \frac{f'(t)}{s}\right| =  O\lp \frac{ \max_{ t^* \le \tm } f''(t^*) }{s^2}\rp= o\lp  \frac{f'(t)}{s}\rp.
\end{equation}

Observe now that we may write
\[Y_A^{\pm}(i+1)-Y_A^{\pm} (i) = (q(t+s^{-1})-q(t))\cdot n - \lp Q_A(i+1)-Q_A(i)\rp \pm \Xi\cdot  n^{1-\ep}.\]
(Note that this stands for the pair of equations in which each $\pm$ is replaced with $+$ or with $-$, respectively.)
We begin by establishing the boundedness claims: it is routine to verify that $c(t)$ and $c'(t)$
are bounded over the reals, implying
\begin{equation}\label{eq:qtdifferentialest}
|q(t+s^{-1})-q(t) - c(t)\cdot s^{-1}| = O(s^{-2}),
\end{equation}
and so
\[0 \ge \lp q(t + s^{-1})-q(t)\rp \cdot n \ge -O\lp \frac{n}{s} \rp.\]
As we have the bound $|f'(t)| = n^{\ep/2 + o(1) } $ and (\ref{eq:Xiestimate}), we have
\( |\Xi|\cdot n^{1-\ep}= o( n/s)\),
and the lower bound in the boundedness claims follows.  To establish the upper bounds,
it remains to bound $Q_A(i)-Q_A(i+1)$.  Consider the `next' edge $e_{i+1} \in O(i)$ and observe that $$Q_A(i)-Q_A(i+1) = |\lp\{e_{i+1}\} \cup C_{e_{i+1}}(i)\rp \cap O_A(i)|.$$  We bound $|C_{e_{i+1}}(i)\cap O_A(i)|$ by considering five cases depending on $|e_{i+1}\cap A|$:\\

\noindent {\bf Case 1: $|e_{i+1} \cap A| = 0$.}  Let $f \in O_A(i)\cap C_{e_{i+1}}(i)$: then $f = A \cup \{v\}$ for some vertex $v$, and since $G(i)+e_{i+1}+f$ contains a copy of $T^{(r)}$, $v \in e_{i+1}$ must hold. (Recall that every pair of edges in $T^{(r)}$ either shares exactly one or $r-1$ vertices.)  In this case, $|C_{e_{i+1}}(i)\cap O_A(i)| \le |e_{i+1}|=r$.\\

\noindent {\bf Case 2: $|e_{i+1} \cap A| = r-1$.} In this case, we may write $e_{i+1} = A \cup \{u_1\}$.  Now, let $f =A \cup \{v\} \in O_A(i)\cap C_{e_{i+1}}(i)$: since $f \cap e_{i+1}=A$ and $f \in C_{e_{i+1}}(i)$, there must exist vertices $u_2,\ldots,u_{r-1} \in N_i(A)$ so that $\{u_1,\ldots,u_{r-1},v\} \in E(i)$.  As then $v \in N_i(\{u_1,\ldots,u_{r-1}\})$, we may bound the number of such choices of $v$ (and hence of $f$) in this case above by $\Delta_{r-1}(G(i))^{r-1}\le \zeta^{r-1} (n \log n)^{(r-1)/r}$.  (Note the bound on the maximum degree follows as $Z_A(i) \le 0$ since $i < \tau$.)\\

\noindent {\bf Case 3:  $|e_{i+1} \cap A| = 1$.}  Write $A = \{x_1, \dots, x_{r-1}\}$, where we take $e_{i+1}\cap A = \{x_1\}$.  Let $f = A \cup \{v\} \in C_{e_{i+1}}(i) \cap O_A(i)$, and suppose $v \notin e_{i+1}$ (as there are at most $r-1$ such $v$), so $f \cap e_{i+1}=\{x_1\}$.  Consider a copy of $T^{(r)}$ in $G(i)+e_{i+1} + f$ using both $e_{i+1}$ and $f$ as edges: without loss of generality, we may assume that one of $e_{i+1},f$ maps to the edge $b_1$ of $T^{(r)}$, the other to the edge $a$.

If $e_{i+1}$ maps to $b_1$, then the $(r-1)$-set $e_{i+1}\setminus \{x_1\}$ maps to the common intersection $B$ of $b_1,\ldots,b_r$. Consequently $v \in N_i(e_{i+1}\setminus \{x_1\})$ must hold, and so there are at most $\Delta_{r-1}(G(i))$ such $r$-sets $f \in C_{e_{i+1}}(i) \cap O_A(i)$.

Otherwise, if $e_{i+1}$ maps to the edge $a$ and $f$ maps to $b_1$, then $\{x_2,\dots, x_{r-1},v\}$ maps to the common intersection $B$.  Thus, for each $u \in e_{i+1}\setminus \{x_1\}$ we have $\{u,x_2, \dots, x_{r-1},v\} \in E(i)$, implying $v \in N_i(\{u,x_2, \dots, x_{r-1}\})$ and (as $e_{i+1}$ is fixed), there are again at most $\Delta_{r-1}(G(i))$ such choices of $f$.  Thus, in this case we have $|C_{e_{i+1}}(i) \cap O_A(i)| \le 2 + 2\Delta_{r-1}(G(i)) = n^{1/r + o(1)}$.\\

\noindent {\bf Case 4: $1 < |e_{i+1} \cap A| = r-2$.}  Let $f = A \cup \{v\} \in O_A(i) \cap C_{e_{i+1}}(i)$.  Since $|f \cap e_{i+1}| \ge |A \cap e_{i+1}| > 1$, $|f \cap e_{i+1}| = r-1$ must hold, implying $v \in e_{i+1}$ and so $|O_A(i) \cap C_{e_{i+1}}(i)| \le r$ as in Case 1. \\

\noindent {\bf Case 5: $2 \le |e_{i+1} \cap A| \le r-3$.}  In this case, $|C_{e_{i+1}}(i)\cap O_A(i)| = 0$, as every $f \in
O_A(i)$ satisfies $1 \le |f \cap e_{i+1}| \le r-2$.\\[5mm]
\noindent From the cases above it follows that $Q_A(i)-Q_A(i+1) = n^{(r-1)/r + o(1)}$, and combining the above bounds, it follows that the sequences $Y^{\pm}_A(0),\ldots,Y^{\pm}_A(\im \land \tau)$ are $(O(n/s),O(n^{ 1 - \frac{1}{2r}}))$-bounded.

We turn now to the sub- and supermartingale claims: all expectation calculations to follow are implicitly conditioned on the history of the process up to step $i$, and we recall that we assume $i < \tau$.  For each open $r$-set $f \in O_A(i)$, we have $f \notin O_A(i+1)$ if and only if $e_{i+1} \in C_f(i) \cup \{f\}$.  Thus,
\begin{align*}
\ev{Y^{\pm}_A((i+1)) - Y^{\pm}_A(i)} &= (q(t+s^{-1})-q(t))\cdot n + \sum_{f \in O_A(i)} \frac{|C_f(i)|+1}{|O(i)|} \pm \Xi\cdot n^{1-\ep}.
\end{align*}

To establish the submartingale claim, consider the following chain of inequalities:
\begin{align*}
\sum_{f \in O_A(i)} \frac{|C_f(i)|+1}{|O(i)|}  &\ge (q(t)-f(t) n^{-\ep})\cdot n \cdot \frac{(c(t) - N^{-\gamma})\cdot D^{1/r} }{(q(t) + N^{-\gamma})\cdot N}\\
&= \lp 1 - \frac{N^{-\gamma} +f(t)n^{-\ep}}{q(t)+N^{-\gamma}}\rp(c(t)-N^{-\gamma})\cdot \frac{n}{s}.\\
&\ge \lp 1 - 2q(t)^{-1}f(t)n^{-\ep}\rp(c(t)-N^{-\gamma})\cdot \frac{n}{s}\\
&\ge \lp c(t) - 2c(t)q(t)^{-1}f(t)n^{-\ep}-N^{-\gamma}\rp\cdot \frac{n}{s}\\
&\ge \lp c(t) - (2c(t)q(t)^{-1}+1)\cdot f(t)n^{-\ep}\rp\cdot \frac{n}{s}.
\end{align*}
The first inequality follows from the bounds given by (\ref{eq:open}) and (\ref{eq:degree}) on the event $\ca{T}_i$ and as $Y_A^-(i) \le 0$, since $i<\tau$.  In the second and fourth inequalities we bounded $N^{-\gamma} < f(t)n^{-\ep}$, valid as $f(t) \ge 1$ and $\ep \ll \gamma$.  Thus, applying this bound and \eqref{eq:qtdifferentialest} gives
\begin{align*}
\ev{Y^+_A(i+1) - Y^+_A(i)} &\ge \Xi\cdot n^{1-\ep} - (2c(t)q(t)^{-1}+1)f(t)\frac{n^{1-\ep}}{s}-O\lp \frac{1}{s^2}\rp\\
&\ge \Xi\cdot n^{1-\ep} - (2c(t)q(t)^{-1}+2)f(t)\frac{n^{1-\ep}}{s}\\
&= \lp (1 + o(1))f'(t)- (2c(t)q(t)^{-1}+2)f(t)\rp \cdot\frac{n^{1-\ep}}{s}
\end{align*}
by \eqref{eq:Xiestimate}.  Since $f'(t)=(Wrt^{r-1}+W)f(t)$ and $2c(t)q(t)^{-1}= 2rt^{r-1}$, this final bound is nonnegative for large $n$ as $W$ is large, and so $Y_A^+(0),\ldots,Y_A^+(\im\land \tau)$ forms a submartingale.

We similarly bound $\ev{Q_A(i) - Q_A(i+1)}$ above to establish the supermartingale claim: as $1 < N^{-\gamma}D^{1/r}$ for large $n$, and as $\ca{T}_i$ holds and $Y_A^+(i) \ge 0$,
\begin{align*}
\sum_{f \in O_A(i)} \frac{|C_f(i)|+1}{|O(i)|}  &\le (q(t)+f(t) n^{-\ep})\cdot n \cdot \frac{(c(t) + 2N^{-\gamma})\cdot D^{1/r} }{(q(t) - N^{-\gamma})\cdot N}\\
&= \lp 1 + \frac{N^{-\gamma} +f(t)n^{-\ep}}{q(t)-N^{-\gamma}}\rp(c(t)+2N^{-\gamma})\cdot \frac{n}{s}\\
&\le \lp 1 + 4q(t)^{-1}f(t)n^{-\ep}\rp(c(t)+2N^{-\gamma})\cdot \frac{n}{s}\\
&\le \lp c(t) + (4c(t)q(t)^{-1}+4)f(t)n^{-\ep}\rp\cdot \frac{n}{s}.
\end{align*}
In addition to the bound $N^{-\gamma} \le f(t)n^{-\ep}$ used above, in the second inequality, we bounded $q(t)-N^{-\gamma} \ge q(t)/2$, and in the final we bounded $2N^{-\gamma}(1+4q(t)^{-1}f(t)n^{-\ep}) \le 4f(t)n^{-\ep}$ as $q(t)^{-1}f(t)n^{-\ep} \le 1$ which holds as $2W\zeta^r<\epsilon$ and $n$ is large.

Thus,
\begin{align*}
\ev{Y^-_A(i+1) - Y^-_A(i)} &\le -\Xi\cdot n^{1-\ep} + (4c(t)q(t)^{-1}+4)f(t)\frac{n^{1-\ep}}{s}+O\lp \frac{1}{s^2}\rp\\
&\le -\Xi\cdot n^{1-\ep} + (4c(t)q(t)^{-1}+5)f(t)\frac{n^{1-\ep}}{s}\\
&= \lp -(1 + o(1))f'(t)+ (4c(t)q(t)^{-1}+5)f(t)\rp \cdot\frac{n^{1-\ep}}{s},
\end{align*}
and again, as $W$ is large, this is strictly negative for $n$ sufficiently large.  Thus, the sequence $Y_A^-(0),\ldots,Y_A^-(\im\land \tau)$ forms a supermartingale, completing the proof.
\end{proof}

Since $Q_A(0)=n-r+1$,  $Y_A^+(0) = r-1 + n^{1-\ep}$ and $Y_A^-(0) = r-1-n^{1-\ep}$.  Applying Lemmas \ref{lem:submartinbd} and \ref{lem:supmartinbd}, respectively, we have
\begin{align*}
\pr{Y^+_A(\im \land \tau) < 0} &\le \exp\left\{ - \Omega \lp \frac{ n^{2-2\ep} }{ \frac{n}{s} \cdot \zeta s\log^{1/r} N \cdot n^{1 - \frac{1}{2r}})} \rp \right\}\\
&= \exp\left\{ - n^{\frac{1}{2r}-2\ep + o(1)}  \right\}\\
&< \exp\left\{ - n^{\frac{1}{4r}} \right\}
\end{align*}
(valid for large $n$ as $\ep$ is small), and an identical calculation yields
\begin{align*}
\pr{Y^-_A(\im \land \tau) > 0} &\le \exp\left\{ - n^{ \frac{1}{4r}} \right\}.
\end{align*}
We have established \eqref{eq:YAplusbd} and \eqref{eq:YAminusbd}.

It remains to prove \eqref{eq:ZAbd}.
\begin{claim} \label{clm:degbdmartingale}
The variables $Z_A(0),\ldots,Z_A(\im \land \tau)$ form a $(2n/N,2)$-bounded supermartingale.
\end{claim}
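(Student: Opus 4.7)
The plan is to decompose $Z_A(i+1)-Z_A(i)$ into the stochastic jump in $d_i(A)$ plus deterministic movements of the trajectory $t\,D^{-1/r}n$ and the error term $f(t)q(t)^{-1}n^{1/r-\ep}$, and then verify the boundedness and the drift inequality separately. Fix $i<\tau$, write $t=t(i)$, and condition on $\ca{F}_i$. Because $A$ is an $(r-1)$-set while only one $r$-set $e_{i+1}$ is added, $d_{i+1}(A)-d_i(A)=\mathbf{1}[A\subseteq e_{i+1}]\in\{0,1\}$. The trajectory contribution is exactly $s^{-1}D^{-1/r}n=n/N$. Setting $g(t):=f(t)q(t)^{-1}=\exp\{(W+1)t^{r}+Wt\}$ and expanding as in \eqref{eq:Xiestimate}, the error-term contribution equals
\[
\bigl(g(t+s^{-1})-g(t)\bigr)\,n^{1/r-\ep}=(1+o(1))\,g'(t)\,n^{1/r-\ep}/s,
\]
which is positive and, since $s=N/D^{1/r}$ with $D^{1/r}=\Theta(n^{1-1/r})$ and $g'(t)$ is polylogarithmic in $n$ on $[0,\tm]$, is of order $o(n/N)$.

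For the boundedness claim, the upper increment is at most $1+0+0\le 2$ and the lower increment is at least $0-n/N-o(n/N)\ge -2n/N$, so the sequence lies in the advertised $(2n/N,2)$-range.

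For the supermartingale claim, $e_{i+1}$ is uniform in $O(i)$, so $\ev{d_{i+1}(A)-d_i(A)\mid\ca{F}_i}=Q_A(i)/|O(i)|$. Because $i<\tau$ we have $Y_A^+(i)\ge 0$, which gives $Q_A(i)\le q(t)n+f(t)n^{1-\ep}$, while $\ca{T}_i$ gives $|O(i)|\ge(q(t)-N^{-\gamma})N$. Using $N^{-\gamma}\le f(t)n^{-\ep}$ (valid from $\ep\ll\gamma$ and $f(t)\ge 1$) and $g(t)n^{-\ep}=o(1)$ (valid from $(W+1)\zeta^{r}\ll\ep$, so $g(\tm)\le n^{\ep/2}$), a short algebraic estimate yields
\[
\frac{Q_A(i)}{|O(i)|}\le\frac{n}{N}\bigl(1+4\,g(t)\,n^{-\ep}\bigr).
\]
Combining the three contributions,
\[
\ev{Z_A(i+1)-Z_A(i)\mid\ca{F}_i}\le\frac{n^{1-\ep}}{N}\Bigl[4\,g(t)-(1-o(1))\,g'(t)\,n^{1/r-1}D^{1/r}\Bigr].
\]
Since $g'(t)/g(t)=W+(W+1)rt^{r-1}\ge W$ and $n^{1/r-1}D^{1/r}=\Theta(1)$, the bracketed quantity is non-positive once $W$ is chosen sufficiently large relative to $r$, which is guaranteed by the hierarchy $1/W\ll\ep$.

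The main obstacle I anticipate is the coupling of this claim to the earlier $Y_A^{\pm}$ claim rather than any single estimate: the upper bound on $Q_A(i)$ used in the drift calculation is precisely the content of $Y_A^+(i)\ge 0$, while the proof of $Y_A^+$ (specifically Case~2 in Claim~\ref{clm:YApmbds}) invoked the degree bound $\Delta_{r-1}(G(i))\le\zeta^{r-1}(n\log n)^{(r-1)/r}$, i.e.\ $Z_A(i)\le 0$. Bundling all three quantities into the single stopping time $\tau$ is what keeps the argument self-consistent, and the supermartingale/submartingale inequalities must be applied to the stopped sequences $Z_A(i\wedge\tau)$ and $Y_A^{\pm}(i\wedge\tau)$ simultaneously.
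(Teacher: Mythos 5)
Your proposal is correct and follows essentially the same route as the paper: the same decomposition of $Z_A(i+1)-Z_A(i)$ into jump, trajectory increment $n/N$, and error-function increment, the same use of $\ca{T}_i$ and $Y_A^+(i)\ge 0$ to bound $Q_A(i)/|O(i)|$, and the same comparison of the drift against $f_1'(t)/s$ with $W$ large. Your closing remark about the mutual dependence between the $Y_A^{\pm}$ and $Z_A$ bounds being resolved by the common stopping time $\tau$ is exactly the point the paper makes in passing.
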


\begin{proof}

We begin by fixing a step $0 \le i \le \im$, and we assume that $i < \tau$.  Throughout we write $t=t(i)$.  Let $f_1(t)=f(t)q(t)^{-1} = \exp((W+1)t^{r} + Wt)$, and let $\Xi_1 := f_1(t+s^{-1})-f_1(t)$.
By the same reasoning given in Claim \ref{clm:YApmbds}, we may bound $|f_1(t)| < n^{\ep/2}$, say, for large $n$,
and $ f_1''(t) \le n^{o(1)} f_1'(t) $, and so
\begin{equation}\label{eq:Xi1estimate}
\left|\Xi_1 - \frac{f_1'(t)}{s}\right| =  O\lp \frac{\max_{t^* < \tm} f_1'' (t^*)}{s^2}\rp=o\lp  \frac{f_1'(t)}{s}\rp.
\end{equation}

Next, we observe that
\[Z_A(i+1)-Z_A(i) = d_{i+1}(A)-d_i(A) - \frac{n}{N} - \Xi_1 \cdot n^{1/r-\ep}.\]
The boundedness claim then follows for $n$ sufficiently large as $0 \le d_A(i+1)-d_A(i) \le 1$ and as
\[|\Xi_1|\cdot n^{1/r-\ep} \le n^{\ep/2 + o(1)}\cdot n^{1/r-\ep}\cdot s^{-1}  < n/N \]
as $s^{-1} =D^{1/r}/N = \Theta(n^{1-1/r}/N)$.

Turning to the supermartingale condition, observe that $d_{i+1}(A)=d_i(A)+1$ if and only if $e_{i+1}$ lies in the set of open $r$-sets counted by $Q_A(i)$.  Conditioned on the history of the process up to step $i$, it follows that
\begin{align}
\nonumber \ev{Z_A(i+1)-Z_A(i)} &= \frac{Q_A(i)}{|O(i)|} - \frac{n}{N} - \Xi_1\cdot n^{1/r-\ep} \\
\nonumber &\le \frac{(q(t) + f(t) n^{-\ep})\cdot n}{(q(t)-N^{-\gamma})\cdot N} - \frac{n}{N} - \Xi_1\cdot n^{1/r-\ep}\\
\nonumber &= \frac{N^{-\gamma} + f(t)n^{-\ep}}{(q(t)-N^{-\gamma})} \cdot\frac{n}{N} - \Xi_1\cdot n^{1/r-\ep}\\
\nonumber &\le (N^{-\gamma} + f(t) n^{-\ep})\cdot 2q(t)^{-1}\cdot \frac{n}{N}  - \Xi_1\cdot n^{1/r-\ep}\\
\nonumber &= (2q(t)^{-1}N^{-\gamma} + 2f_1(t) n^{-\ep})\cdot \frac{n}{N} - \Xi_1\cdot n^{1/r-\ep}\\
\label{eq:ZAlowerbd} &\le 4f_1(t) \cdot n^{-\ep}\cdot \frac{n}{N}  - \Xi_1\cdot n^{1/r-\ep}
\end{align}
Note that the first inequality holds as $\ca{T}_i$ and $Y_A^+(i) \ge 0$ since $i < \tau$, the second as $q(t)-N^{-\gamma} \ge q(t)/2$ since $\zeta \ll \gamma$, and the final as $N^{-\gamma} \le f(t)\cdot n^{-\ep}$, since $f(t) \ge 1$ and $\ep \ll \gamma$.  Noting that for large $n$, $D \ge n^{r-1}/r^r$ and so $s^{-1} \ge n^{1-1/r}/(rN)$, by \eqref{eq:Xi1estimate} we have
\begin{align*}
\Xi_1 \cdot n^{1/r-\ep} &= (1+o(1))\cdot \frac{f_1'(t)}{s}\cdot n^{1/r-\ep} \\
&\ge (1+o(1))\cdot \frac{Wf_1(t)\cdot n^{1-1/r}}{rN}n^{1/r-\ep}\\
&> \frac{W}{2r}\cdot f_1(t) \cdot n^{-\ep} \cdot \frac{n}{N}.
\end{align*}
Thus, since we assume $W$ is large, the supermartingale condition follows now from \eqref{eq:ZAlowerbd}.
\end{proof}

Finally, to show \eqref{eq:ZAbd}, we apply Lemma \ref{lem:supmartinbd} to yield
\begin{align*}
\pr{Z_A(\im \land \tau) > 0} &\le \exp \left\{  - \Omega \lp \frac{n^{2/r-2\ep}}{ \frac{n}{N} \cdot \zeta s\log^{1/r} N } \rp \right\} \\
&= \exp\left\{ - \frac{n^{2/r-2\ep}}{ n^{1 - (r-1)/r + o(1)} }  \right\} \\
&= \exp\left\{ - n^{1/r - 2\ep- o(1)} \right\}
\end{align*}
which suffices as $\ep$ is small.  This completes the proof of Lemma \ref{lem:maxdeg}.

\subsection{Proof of Lemma \ref{lem:relopentraj}} We begin by letting
\[S = S(n) = \bin{2\ell}{r} - 2\bin{\ell}{r},\]
and we note that $S = \Theta(k^r)$.

We fix a pair $A,B$ of disjoint $\ell$-element subsets of $[n]$, and define the following sequences of random variables:  for each step $i \ge 0$, let
\begin{align*}
X^+(i) &= q(t)\cdot S - Q_{A,B}(i) + f(t)\cdot Sn^{-\ep}, \mbox{ and } \\
X^-(i) &= q(t)\cdot S - Q_{A,B}(i) - f(t)\cdot Sn^{-\ep}.
\end{align*}
We next define the stopping time $\tau^*$ to be the minimum of $\tau_{A,B}$ and the first step $i$ for which $X^+(i) \le 0$, $X^-(i) \ge 0$, or the event $\ca{T}_i$ fails to hold.

\begin{claim}\label{clm:lem1mart}
The sequence $X^+(0),\ldots,X^+(\im \land \tau^*)$ forms a $(O(k^r/s),O(k^{r-1}/n^{4\ep}))$-bounded submartingale, and the sequence $X^-(0),\ldots,X^-(\im \land \tau^*)$ forms a $(O(k^r
/s),O(k^{r-1}/n^{4\ep}))$-bounded supermartingale.
\end{claim}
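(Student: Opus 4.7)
The plan is to follow the structure of Claim~\ref{clm:YApmbds} step by step, with $Q_{A,B}$ replacing $Q_A$ and $S = \bin{2\ell}{r} - 2\bin{\ell}{r}$ replacing $n$, but with the stopping time $\tau_{A,B}$ (and the associated constraint $|N_i(X)\cap(A\cup B)| < k/n^{2\ep}$) playing the crucial role of taming the one-step decrement in $Q_{A,B}$. Fix $i < \tau^*$, write $t = i/s$, and decompose
\[
X^{\pm}(i+1) - X^{\pm}(i) = \lp q(t+s^{-1}) - q(t)\rp S - \lp Q_{A,B}(i+1) - Q_{A,B}(i)\rp \pm \Xi \cdot Sn^{-\ep},
\]
where $\Xi = f(t+s^{-1})-f(t)$ satisfies $\Xi = (1+o(1))f'(t)/s$ and $|\Xi| \le n^{\ep/2+o(1)}/s$ by the same Taylor expansion used in Claim~\ref{clm:YApmbds}. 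The remainder of the argument splits into verifying boundedness and then the submartingale/supermartingale drift on the conditional expectation.

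The lower bound of the boundedness claim follows quickly: $(q(t+s^{-1})-q(t))S = -c(t)S/s + O(S/s^2)$ contributes $O(k^r/s)$ in magnitude (using $S = \Theta(k^r)$), $Q_{A,B}$ is monotone nonincreasing before $\tau_{A,B}$, and $\Xi\cdot Sn^{-\ep} = o(k^r/s)$. The upper bound reduces to showing that at most $O(k^{r-1}/n^{4\ep})$ open $r$-sets with respect to $(A,B)$ can be closed in one step. Each such closed $f$ together with $e_{i+1}$ lies in a copy of $T^{(r)}$, so $|e_{i+1}\cap f|\in\{1,r-1\}$; running a case split parallel to Cases~1--5 of Claim~\ref{clm:YApmbds} pins $f$ down as $R\cup T$, where $R\subseteq e_{i+1}$ is the $(r-1)$-set common to the $b_j$'s of the $T^{(r)}$ and $T\subseteq N_i(R)$ lies in $A\cup B$. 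When $N_i(R)$ meets both $A$ and $B$, $i < \tau_{A,B}$ forces $|N_i(R)\cap(A\cup B)| < k/n^{2\ep}$ and hence at most $\bin{k/n^{2\ep}}{r-1} = O(k^{r-1}/n^{2(r-1)\ep}) \le O(k^{r-1}/n^{4\ep})$ candidate $f$'s (since $r\ge 3$), and the $O(r)$ choices of $R\subseteq e_{i+1}$ absorb into the implicit constant. The main obstacle is the residual case where $N_i(R)$ meets only one of $A,B$: a naive count via the max degree from Lemma~\ref{lem:maxdeg} gives only $O(n\log n)$, which is too weak, so one must exploit the rigidity of the $T^{(r)}$ configuration (together with the co-degree bound from \eqref{eq:codegree} and conditioning on $\ca{D}_{\im}$) to either rule out such configurations or reassociate each such $f$ to an alternative $(r-1)$-set whose neighborhood genuinely meets both sides, returning to the $k/n^{2\ep}$ bound.

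The trend hypothesis is a direct transcription of the corresponding computation in Claim~\ref{clm:YApmbds}: conditioning on $\ca{F}_i$,
\[
\ev{Q_{A,B}(i) - Q_{A,B}(i+1) \mid \ca{F}_i} = \sum_{f \text{ open w.r.t. } (A,B)} \frac{|C_f(i)|+1}{|O(i)|},
\]
and the estimates \eqref{eq:open},\eqref{eq:degree} together with $Q_{A,B}(i) = q(t)S \pm f(t)Sn^{-\ep}$ (guaranteed since $i < \tau^*$) evaluate this sum to $(c(t)S/s)\cdot\lp 1 + O(f(t)q(t)^{-1}n^{-\ep})\rp$. Combining with $(q(t+s^{-1})-q(t))S = -c(t)S/s + O(S/s^2)$ and $\Xi\cdot Sn^{-\ep} = (1+o(1))f'(t)Sn^{-\ep}/s$, the expected increment of $X^+$ is bounded below by $\lp f'(t) - (2c(t)q(t)^{-1}+2)f(t)\rp\cdot Sn^{-\ep}/s$, which is nonnegative for large $W$ since $f'(t) = (Wrt^{r-1}+W)f(t)$; the symmetric inequality with the opposite sign gives the supermartingale property for $X^-$. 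With boundedness and the drift in hand, Lemmas~\ref{lem:submartinbd}~and~\ref{lem:supmartinbd} deliver the dynamic concentration for $Q_{A,B}$.
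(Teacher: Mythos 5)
Your skeleton is correct and matches the paper's: same variables, same decomposition of $X^{\pm}(i+1)-X^{\pm}(i)$, same Taylor estimate for $\Xi$, and your final drift computation is a faithful transcription of Claim~\ref{clm:YApmbds}. However, there are genuine gaps at exactly the two points where this claim differs from the $(r-1)$-set case, and both trace back to not exploiting the set $O_{\tau}$ of open $r$-sets whose selection as $e_{i+1}$ would force $\tau_{A,B}=i+1$. For the boundedness count: by the second bullet in the definition of ``open with respect to $A,B$,'' if $e_{i+1}\in O_{\tau}$ then $Q_{A,B}(i+1)=Q_{A,B}(i)$, so one only needs to bound $|C_e(i)\cap O_{A,B}(i)|$ for $e\notin O_{\tau}$. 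In the configuration where $e$ maps to some $b_1$ and $f$ to $a$, with $R'=e\setminus f$, adding $e$ would place \emph{all} of $f$ inside $N_{i+1}(R')$, and $f$ meets both $A$ and $B$; hence $e\notin O_{\tau}$ already forces $|N_i(R')\cap(A\cup B)|<k/n^{2\ep}$ regardless of whether $N_i(R')$ currently meets both sides. So your ``main obstacle'' dissolves without any reassociation -- but you do need this definitional device, which your write-up never invokes. More seriously, your case analysis (``$R\subseteq e_{i+1}$ is the $(r-1)$-set common to the $b_j$'s'') omits the configuration in which $f$ maps to $b_1$ and $e$ maps to $a$: there the common $(r-1)$-set $R'$ lies in $A\cup B$, not in $e_{i+1}$, and a naive count gives $\binom{2\ell}{r-1}=\Theta(k^{r-1})$ candidates with no $n^{-4\ep}$ saving. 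The paper handles this case with a separate argument via Lemma~\ref{lem:subgraph}, showing the $(r-1)$-graph of sets $R'\subseteq A\cup B$ with $N_i(R')\supseteq e\setminus\{v\}$ has $\Delta_{r-2}<4r$ (else a dense forbidden configuration on $6r-3$ vertices with $4r(r-1)$ edges would appear), yielding only $O(k^{r-2})$ such $f$. This case is absent from your proposal.

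The second gap is in the trend hypothesis. Your identity $\ev{Q_{A,B}(i)-Q_{A,B}(i+1)\mid\ca{F}_i}=\sum_f(|C_f(i)|+1)/|O(i)|$ is not correct for this variable: steps with $e_{i+1}\in O_{\tau}$ contribute zero to the decrement, so the correct expression is $\sum_{e\in O_{A,B}(i)}|C_e(i)\setminus O_{\tau}|/|O(i)|$. Since the submartingale direction requires a \emph{lower} bound on this expected decrement (to offset the decrease of $q(t)S$), the omission is in the dangerous direction: you must additionally show that $|O_{\tau}|$ is negligible compared to $|C_e(i)|=\Theta(D^{1/r})$. The paper proves $|O_{\tau}|\le 4n^{2\ep}k<N^{-\gamma}D^{1/r}$ via an inclusion--exclusion argument using the co-degree bound \eqref{eq:codegree}. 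Without estimate \eqref{eq:otauest} and the $O_{\tau}$ correction, the submartingale claim for $X^+$ is not established.
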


\begin{proof}
We fix a step $0 \le i \le \im$, and we suppose that $i < \tau^*$. Throughout we write $t=t(i)$, and note $t(i+1)=t+s^{-1}$ and that $s^{-1}=D^{1/r}/N=\Theta(n^{1-1/r-r})$.

To aid the calculations to follow, we begin by estimating the quantity $\Xi := f(t+s^{-1}) - f(t)$.  Recall equation
(\ref{eq:Xiestimate}): 
\begin{equation*}
\left|\Xi - \frac{f'(t)}{s}\right| =  O\lp \frac{ \max_{ t^* \le \tm } f''(t^*) }{s^2}\rp= o\lp  \frac{f'(t)}{s}\rp.
\end{equation*}
Observe that we may write
\[X^{\pm}(i+1)-X^{\pm} (i) = (q(t+s^{-1})-q(t))\cdot S - \lp Q_{A,B}(i+1)-Q_{A,B}(i)\rp \pm \Xi\cdot  Sn^{-\ep}.\]
(As above, this stands for the pair of equations in which each $\pm$ is replaced with $+$ or with $-$, respectively.)
We begin by establishing the boundedness claims:
by (\ref{eq:qtdifferentialest}) and
as $S = \Theta(k^r)$, we have
\[0 \ge \lp q(t + s^{-1})-q(t)\rp \cdot S \ge -O\lp \frac{k^r}{s} \rp.\]
Next, bounding $|f'(t)| \le n^{\ep/2 + o(1)}$,
\[|\Xi|\cdot Sn^{-\ep} \le n^{-\ep/2+o(1)}\cdot \frac{k^r}{s} \]
In order to establish the boundedness part of the claim,
it remains to bound the quantity $Q_{A,B}(i+1)-Q_{A,B}(i)$.
Let $O_{A,B}(i)$ denote the set of $r$-sets that are
open with respect to the pair $A,B$ in $G(i)$, and let $O_{\tau}$ denote the set of
all open $r$-sets whose selection as $e_{i+1}$ would result in $\tau_{A,B}=i+1$.

Now, if $e_{i+1} \in O_{\tau}$, then $Q_{A,B}(i+1)-Q_{A,B}(i) = 0$ by definition, and, otherwise, we have
\[Q_{A,B}(i+1)-Q_{A,B}(i) = -|O_{A,B}(i) \cap ( C_{e_{i+1}}(i) \cup \{e_{i+1}\})|.\]
It suffices, then, to bound the quantity $|C_e(i) \cap O_{A,B}(i)|$ for all $e \in O(i)\setminus O_{\tau}$: fix such an open $r$-set $e$.  Now, for any $f \in C_e(i) \cap O_{A,B}(i)$, there is a copy $T_{r,f}$ of $T^{(r)}$ in the graph $G(i)+e+f$ using both $e$ and $f$ as edges.  Up to isomorphism, there are only three possibilities for the pair $(e,f)$ in that copy: $(e,f)$ maps to $(b_1,b_2)$, or to $(b_1,a)$, or to $(a,b_1)$. We treat these three cases separately.\\

\noindent {\bf Case 1: $(e,f)$ maps to $(b_1,b_2)$.} In this case, the $r-1$ vertices that map to the set $R$ lie entirely in $e$, and $f$ is the union of those $r-1$ vertices along with another vertex lying in $A \cup B$.  Thus, we may bound the total number of such $f$ above by $rk$.\\

\noindent {\bf Case 2: $(e,f)$ maps to $(b_1,a)$.}  Let $R' = e-f$, the set of $r-1$ vertices shared by all edges $b_j$ in this copy of $T^{(r)}$. Then $f-e \subseteq N_i(R')$: since $f \cap A \ne \emptyset$ and $f \cap B \ne \emptyset$ (as $f \in O_{A,B}(i)$), and since $e \notin O_{\tau}$, it follows that $|N_i(R') \cap( A \cup B)| \le k/n^{2\ep}$.  Thus, for a fixed such choice of $R'$ there are fewer than $(k/n^{2\ep})^{r-1}$ such open $r$-sets $f$, yielding a total bound of at most $r(k/n^{2\ep})^{r-1}$.\\

\noindent {\bf Case 3: $(e,f)$ maps to $(a,b_1)$.}  There exists an $(r-1)$-set $R' \subseteq A \cup B$ and a vertex $v \in e$ so that $f = R' \cup \{v\}$ and so that $e\setminus \{v\} \subseteq N_i(R')$.  To bound the number of such $f$, it suffices to bound the number of $(r-1)$-sets $R' \subseteq A \cup B$ for which $N_i(R')$ contains $(r-1)$ vertices from $e$.

To that end, fix a vertex $v \in e$ and let $\ca{H}_v$ denote the $(r-1)$-uniform hypergraph on $(A \cup B) \setminus e $ whose edges are the $(r-1)$-subsets $X$ for which $N_i(X) \supseteq e\setminus \{v\}$.  We claim that
\[\Delta_{r-2}(\ca{H}_v) < 4r.\]
Suppose to the contrary that this does not hold: then there exist an $(r-2)$-set $Y \subseteq (A \cup B)\setminus e$ and vertices $x_1,x_2,\ldots,x_{4r}\in (A\cup B)\setminus (Y \cup e )$ so that for each for each vertex $u \in e\setminus \{v\}$, $\{u\} \cup Y \cup \{x_j\} \in E(i)$ for $1 \le j \le 4r$.  It follows from Lemma~\ref{lem:subgraph} that such a
configuration does not appear in $ G(i)$.  Indeed, as this configuration spans $ 6r-3 $ vertices and has $ 4r(r-1) $ edges, the
probability that such a configuration appears is at most
\[ n^{6r-3} \left( \frac{i}{N} \right)^{4r(r-1)} = n^{6r-3 - 4 (r-1)^2 + o(1)} = o(1). \]
It follows that $|\ca{H}_v| < 4r \bin{k}{r-2}$, and thus the total number of such open $r$-sets $f$ as above is less than $4r^2k^{r-2}$.\\

As $\ep$ is small and as $k = n^{1/r + o(1)}$, it follows that for large $n$ we have
\[ |C_e(i) \cap O_{A,B}(i)| \le rk + r\cdot (k/n^{2\ep})^{r-1} + 4r^2k^{r-2} = O(k^{r-1}/n^{2\ep(r-1)}),\]
and as $r \ge 3$ we conclude that
\[0 \ge Q_{A,B}(i+1)-Q_{A,B}(i)  = -O(k^{r-1}/n^{4\ep}).\]
Thus, it follows that the sequences $X^{\pm}(0), \ldots, X^{\pm}(\im \land \tau^*)$ are $(O(k^r/s), O(k^{r-1}/n^{4\ep}))$-bounded as claimed.\\

We now turn to the sub- and supermartingale claims, and we remark that all expectation and probability calculations to follow are implicitly conditioned on the history of the process up to step $i$.  We begin by bounding the expected value of $Q_{A,B}(i+1)-Q_{A,B}(i)$.  Recall that we assume $i < \tau_{A,B}$ and that $O_{\tau} \subseteq O(i)$ consists of the open $r$-sets whose selection as $e_{i+1}$ would yield $\tau_{A,B} = i+1$. We claim that
\begin{equation}\label{eq:otauest}
|O_{\tau}| \le 4n^{2\ep}\cdot k\,
\end{equation}
To see this, let
\[\ca{R}:=\left\{X \in \bin{[n]}{r-1} : |N_i(X)\cap (A \cup B)| \ge k/(2n^{2\ep})\right\}.\]
Then $|\ca{R}| < 4n^{2\ep}$, which can be argued as follows. Suppose by way of contradiction that $\exists \ca{S} \subseteq \ca{R}$ with $|\ca{S}| = 4n^{2\ep}$. Let $N = \bigcup_{Y \in \ca{S}} (N_i(Y) \cap (A \cup B))$. By inclusion-exclusion and the fact that Lemma~\ref{lem:subgraph} implies that the co-degree of any pair of $(r-1)$-sets is at most $5r$ (see (\ref{eq:codegree})), we have
\[k \ge |N| \ge |\ca{S}|\cdot k /(2n^{2\ep}) - |\ca{S}|^2 5r \ge 2k-80rn^{4\ep},\]
a contradiction as $\ep$ is small and $k = n^{1/r + o(1)}$.  To deduce \eqref{eq:otauest} it suffices to observe that each open $r$-set $e \in O_{\tau}$ can be written $e = \{v\} \cup X$ for some vertex $v \in A \cup B$ and $(r-1)$-set $X$ satisfying $|N_i(X) \cap (A \cup B)|\ge k/n^{2\ep}-1$ (and thus $X \in \ca{R}$).

Conditioning on the event $e_{i+1} \notin O_{\tau}$ then yields
\begin{align*}
\ev{Q_{A,B}(i+1)-Q_{A,B}(i)}
&= -\sum_{e \in O_{A,B}(i)} \frac{|C_e(i)\setminus O_{\tau}|}{|O(i)|}
\end{align*} by linearity of expectation. Consequently,
\[\ev{X^{\pm}(i+1)-X^{\pm}(i)} = (q(t+s^{-1})-q(t))\cdot S + \sum_{e \in O_{A,B}(i)} \frac{|C_e(i)\setminus O_{\tau}|}{|O(i)|} \pm \Xi\cdot Sn^{-\ep}.\]

To establish the submartingale claim, we note first that as $r \ge 3$ and $\ep \ll \gamma \ll 1/r$, from \eqref{eq:otauest} we have $|O_{\tau}| = n^{1/r + 2\ep + o(1)} < N^{-\gamma}\cdot D^{1/r}$.
Now, as $i < \tau^*$,  $\ca{T}_i$ and $X^-(i) \le 0$ hold, we have
\begin{align*}
\sum_{e \in O_{A,B}(i)} \frac{|C_e(i)\setminus O_{\tau}|}{|O(i)|} &\ge \lp q(t)-\frac{f(t)}{n^{\ep}}\rp \cdot S \cdot \frac{(c(t)-2N^{-\gamma})D^{1/r}}{(q(t)+N^{-\gamma})N}\\
&= \lp 1 - \frac{N^{-\gamma} +f(t)n^{-\ep}}{q(t)+N^{-\gamma}}\rp(c(t)-2N^{-\gamma})\cdot \frac{S}{s}\\
&\ge \lp 1 - 2q(t)^{-1}f(t)n^{-\ep}\rp(c(t)-2N^{-\gamma})\cdot \frac{S}{s}\\
&\ge \lp c(t) - 2c(t)q(t)^{-1}f(t)n^{-\ep}-2N^{-\gamma}\rp\cdot \frac{S}{s}\\
&\ge \lp c(t) - (2c(t)q(t)^{-1}+1)f(t)n^{-\ep}\rp\cdot \frac{S}{s}.
\end{align*}
Note that these bounds follow for large $n$ since $f(t) \ge 1$ and $\ep \ll \gamma$ imply $N^{-\gamma} \le f(t)n^{-\ep}/2$. Applying this and \eqref{eq:qtdifferentialest} gives
\begin{align*}
\ev{X^+(i+1) - X^+(i)} &\ge \Xi\cdot Sn^{-\ep} - (2c(t)q(t)^{-1}+1)f(t)\frac{Sn^{-\ep}}{s}-O\lp \frac{1}{s^2}\rp\\
&\ge \Xi\cdot Sn^{-\ep} - (2c(t)q(t)^{-1}+2)f(t)\frac{Sn^{-\ep}}{s}\\
&= \lp (1 + o(1))f'(t)- (2c(t)q(t)^{-1}+2)f(t)\rp \cdot\frac{Sn^{-\ep}}{s}
\end{align*}
by \eqref{eq:Xiestimate}.  Since $f'(t)=(Wrt^{r-1}+W)f(t)$ and $2c(t)q(t)^{-1}= 2rt^{r-1}$, this final bound is nonnegative for large $n$ as $W$ is large, and so $X^+(0),\ldots,X^+(\im\land \tau)$ forms a submartingale.

Turning to the supermartingale claim, we take a similar approach and begin by noting as $\ca{T}_i$ holds and $X^+(i) \ge 0$,
\begin{align*}
\sum_{e \in O_{A,B}(i)} \frac{|C_e(i)\setminus O_{\tau}|}{|O(i)|} &\le \lp q(t)+\frac{f(t)}{n^{\ep}}\rp \cdot S \cdot \frac{(c(t)+N^{-\gamma})D^{1/r}}{(q(t)-N^{-\gamma})N}\\
&= \lp 1 + \frac{N^{-\gamma} +f(t)n^{-\ep}}{q(t)-N^{-\gamma}}\rp(c(t)+N^{-\gamma})\cdot \frac{S}{s}\\
&\le \lp 1 + 2q(t)^{-1}f(t)n^{-\ep} \rp(c(t)+N^{-\gamma})\cdot \frac{S}{s}\\
&\le \lp c(t) + (2c(t)q(t)^{-1}+1)f(t)n^{-\ep} \rp\cdot \frac{S}{s}.
\end{align*}
The supermartingale condition then follows in essentially the same way as the submartingale condition above.
\end{proof}

Now, as $X^+(0) = Sn^{-\ep}$, $X^-(0) = -Sn^{-\ep}$, $S = \Theta(k^r)$ and $\im = s\cdot n^{o(1)}$, it follows from Claim \ref{clm:lem1mart} and Lemmas \ref{lem:submartinbd} and \ref{lem:supmartinbd} that
\[\pr{X^+(\im \land \tau^*) \le 0} \le  \exp \left\{ - \Omega \lp \frac{ S^2 n^{-2 \ep}}{ \frac{ k^r}{s} \cdot \frac{ k^{r-1}}{n^{4 \ep}} \cdot s n^{o(1)}} \rp \right\} =    \exp\left\{ - k \cdot n^{2\ep - o(1)} \right\}. \]
Simillarly, we have
\[\pr{X^-(\im \land \tau^*) \ge 0} \le \exp \left\{ - k \cdot n^{2\ep - o(1)} \right\} .\]
Since there are fewer than $n^{2k} = \exp \{ 2k \log n \}$ choices of the pair of sets $A$ and $B$, Lemma \ref{lem:relopentraj} follows from the union bound.

\bibliographystyle{plain}

\end{document}